\newcommand{\norm}[1]{\lVert#1\rVert}
\def\div{\mathrm{div}}
\def\curl{\mathrm{curl}}
\def\vecv{\mathbf{v}}
\def\vecn{\mathbf{n}}
\def\vecx{\mathbf{x}}
\def\vece{\mathbf{e}}
\def\calT{\mathcal{T}}
\def\hdiv{H(\div;\Omega)}
\newtheorem{remark}{Remark}[section]
\title{Genetic Exponentially Fitted Method for Solving Multi-dimensional Drift-diffusion Equations}
\author{Melissa R. Swager\thanks{Department of Mathematics,
        Colorado State University, Fort Collins, Colorado, 80523-1874
        ({\tt swager@math.colostate.edu}).}
        \and Y.~C.~Zhou\thanks{Department of Mathematics,
        Colorado State University, Fort Collins, Colorado, 80523-1874
        ({\tt yzhou@math.colostate.edu}).}}
\begin{document}

\maketitle

%% Please type your abstract here.
\begin{abstract}
A general approach was proposed in this article to develop high-order exponentially fitted basis functions for
finite element approximations of multi-dimensional drift-diffusion equations for modeling biomolecular electrodiffusion
processes. Such methods are highly desirable for achieving numerical stability and efficiency. We found that by utilizing 
the one-one correspondence between continuous piecewise polynomial space of degree $k+1$ and the divergence-free vector 
space of degree $k$, one can construct high-order 2-D exponentially fitted basis functions that are strictly interpolative 
at a selected node set but are discontinuous on edges in general, spanning nonconforming finite element spaces. 
First order convergence was proved for the methods constructed from divergence-free Raviart-Thomas space $RT_0^0$ at 
two different node sets. 
%High order 3-D methods can be constructed provided that computable basis of $RT_k^0$ can be found.
\end{abstract}

%% Keywords should be separated by \*\ sign
\begin{keywords}
Drift-diffusion equations; Exponential fitting; Multi-dimensional; Divergence-free basis functions;
High order methods
\end{keywords}

\pagestyle{myheadings}
\thispagestyle{plain}
%\markboth{TEX PRODUCTION AND V. A. U. THORS}{SIAM MACRO EXAMPLES}
\markboth{MELISSA R. SWAGER AND Y. C. ZHOU}{GENETIC EXPONENTIALLY FITTED METHODS}

\section{Introduction}
We will propose in this article a general approach to construct exponentially fitted methods for numerically solving the drift-diffusion
equation
\begin{equation} \label{eqn:driftdiffusioneq}
- \nabla \cdot (D \nabla u + D \beta u \nabla \phi) = f,
\end{equation}
where $u$ is the density of charged particles, $D$ is the diffusion coefficient, $\beta$ is a problem-dependent constant assumed positive 
in this study, $\phi$ is the given electrostatic potential field. The source function $f$ models the generation or recombination of
the particles, and is assumed here to be independent of $u$. The drift-diffusion equation is widely applied in semiconductor device
modeling, where $u(x)$ is the density of charge carriers \cite{JeromeSemiconductorbook},
and in biomolecular simulations, where $u(x)$ can be the density of ions, ligands, lipids, or other diffusive
molecules \cite{Lu07c,ZhouY2010a,ZhouY2012b}. Generalizations of Eq.(\ref{eqn:driftdiffusioneq}) to model quantum effects,
finite particle sizes, particle-particle correlations and interactions have been made recently \cite{deFalcoC2009a,ZhouY2011a,ZhouY2012b},
in many cases by replacing the electrostatic potential with an effective potential that models these non-electric interactions.
The drift-diffusion equations arising in biomolecular simulations are in general multidimensional due to the intrinsic 3-D structure
of macromolecules. For drift diffusion in bulk solution or through ion channels 3-D drift-diffusion equations are usually adopted so
the charge density can be solved at sufficient temporal and spatial accuracy. For lateral transport of charged particles on
surfaces, such as lateral motion of lipids, lipid clusters, or proteins on membrane surfaces, 2-D drift-diffusion will be very useful 
for the reduction of dimensionality \cite{KiselevV2011a}. Surface drift-diffusion equation rather than 2-D drift-diffusion equation has 
to be used for lateral diffusion on smoothly curved surfaces \cite{AmatoreC2009a,AmatoreC2009b,ZhouY2012b}, for which the gradient and 
divergence operators in (\ref{eqn:driftdiffusioneq}) have to be replaced by surface gradient and surface divergence operators, respectively.
This surface drift-diffusion equation can be solved using surface finite element methods, which share many properties with 2-D finite element methods. 
It is known that numerical solutions of drift-diffusion equations may suffer instability in the advection-dominated regimes that usually appear near the 
strongly charged molecular surfaces such as ion channels, deoxyribonucleic acid (DNA), or membrane surfaces with proteins bound or in the vicinity. 
Numerical solutions of drift-diffusion equation in these biomolecular systems necessitates stable and efficient numerical methods.

Exponentially-fitted methods, known originally as the Scharfetter-Gummel method \cite{ScharfetterD1969},
is a class of finite element methods for solving Eq.(\ref{eqn:driftdiffusioneq}) through an exponential approximation to
the density function $u(x)$ on individual mesh elements. This is made possible by the introduction of the Slotboom
variable
\begin{equation} \label{eqn:slotboom}
\rho = u e^{\beta \phi}, \end{equation} with which Eq.(\ref{eqn:driftdiffusioneq}) can be symmetrized to be
\begin{equation} \label{eqn:driftdiffusioneq_sym}
- \nabla \cdot (De^{-\beta \phi} \nabla \rho) = f.
\end{equation}
The density function $u(x)$ can then be exponentially fitted through locally constant or linear approximations of the
density current $J=De^{-\beta \phi} \nabla \rho$. With exponentially fitted methods the gradient of the
electric potential field can be incorporated to the basis functions, and this makes the methods intrinsically stable
against strong drift when the electric field is strong. This desirable feature has motivated many studies of the
exponentially fitted methods
\cite{AllenD1955,ScharfetterD1969,BrezziF1989a,GattiE1998,SaccoR1998a,HillariretC2004a,PinnauR2004a,AngermannL2003a}.

Exponential fitting for 1-D drift-diffusion equation is obtained by exactly solving a two-point boundary value problem
locally in a divergence-free space of Lagrange basis functions \cite{ScharfetterD1969}. The resultant nodal basis functions
for $u(x)$ involve Bernoulli functions. In multi-dimensional space, however, it is generally difficult to find a divergence-free
space where the basis functions are interpolative at nodes and continuous on edges, for the local problem does not admit an
analytical solution on triangles, quadrilaterals, or tetrahedra. To circumvent this difficulty a number of averaging schemes,
such as exact average, volume harmonic average, and edge harmonic average, were introduced to the exponentially transformed diffusion
coefficient in the symmetrized equation (\ref{eqn:driftdiffusioneq_sym}).
2-D exponentially fitted methods of this type were constructed in \cite{BrezziF1989a}. Such a construction of multi-dimensional exponential
fitting methods is further analyzed in \cite{GattiE1998}, where it is found that the classical Scharfetter-Gummel exponentially fitted
scheme can be exactly reproduced by the harmonic averaging of the piecewise linear Galerkin method. Linear convergence in the exponentially
weighted norm is proved as well. While the convergence and stability of these multi-dimensional methods in numerical computations were not
reported in all of these studies, it was shown that they can well resolve the sharp gradient of the charge density without incurring
any unphysical oscillations. Averaging techniques, such as exact average and volume harmonic average, may suffer from overflow or
lead to over-smeared solutions. Some of the best results are obtained using edge harmonic average, which utilizes approximated exponential
fitting along edges of triangles.

There are continual efforts to obtain genetic exponential fitting in multi-dimensional spaces rather than through averaging. These
include approximate spline fitting \cite{MillerJ1994a,RiordanE1991a,SaccoR1998a,AngermannL2003a,AngermannL2003b}, nonconforming finite
element \cite{SaccoR1997a}, finite volume methods \cite{LazarovR1996a}, and exponentially fitted difference methods \cite{HegartyA1993a}.
The approximate spline methods strongly resemble the 1-D exponential fitting as the locally divergence free current field solutions 
are sought. Such solutions, however, are not unique, and thus approximations will be in place to achieve uniqueness. Nevertheless, locally 
constant approximation to the density current in 2-D violates the nodal interpolation constraints \cite{SaccoR1995a}. This major drawback is overcome
by using locally linear approximation to the density current, and nodal interpolative basis functions for density functions are obtained \cite{SaccoR1999a}.
These basis functions are discontinuous on edges in general so the finite element space is nonconforming. Extension of this
approach to 3-D equations seems feasible. The work in \cite{AngermannL2003a,AngermannL2003b} are among the few attempts to establishing 3-D exponential fitting methods,
where 1-D exponential fitting is sought along individual edges connecting to the vertices, and the nodal basis functions are defined to be the
linear combinations of local 1-D exponentially fitted basis functions. The resulting finite element space is conforming. Despite these efforts
there are very few applications of exponentially fitted methods to realistic multi-dimensional drift-diffusion equations. 
The analysis and application in \cite{PinnauR2004a}, for example, is on 1-D drift-diffusion equation with quantum-corrected
potential. Sometimes Streamline-Upwind/Petrov-Galerkin (SUPG) methods that were developed for advection-diffusion equations have to be
adopted \cite{ChaudhryJ2011b} for stabilization. Moreover, it seems unclear whether the current methodologies can be extended for
constructing high order exponentially fitting methods.

We are motivated to propose in this article a general approach for constructing exponential fitting methods in multi-dimensional spaces.
We start with the divergence-free vector space that is obtained by taking the curl of the nodal basis of $k^{th}$ order Raviart-Thomas
conforming space $P_{k+1}$, to define a prototype of the basis functions for the Slotboom variable $\rho$. The final form of the exponentially fitted
basis functions for $\rho(x)$ and $u(x)$ will be computed by enforcing the nodal interpolative constraints. Our approach is different from that
in \cite{SaccoR1999a} as the latter seeks a divergence-free approximation of the density current and restrictive assumptions are introduced
to get analytical representations of the basis functions. We do not enforce the density current to be divergence-free, and this relaxation
enables our approach to reproduce the standard Lagrange spaces for solving the Poisson equation at the limit of vanishing electric potential
in the drift-diffusion equation. With our approach 2-D exponentially fitted methods of arbitrary high order can be readily constructed by
starting with high-order Raviart-Thomas conforming spaces. We will show that our method does not entail some of the crucial pitfalls
indicated in previous studies. First-order 3-D exponentially fitted method can be established similarly, but the extension of our approach
to high-order methods for 3-D problems is difficult due to the lack of one-one correspondence between $P_{k+1}$ and
the divergence-free vector space in 3-D.

The rest of the paper is organized as follows. In the next section we review some of the most important exponential
fitting methods and summarize their major features. In Section \ref{sec:method} we define the divergence-free spaces and their
basis functions, and describe the procedure to compute exponentially fitted charge density $u(x)$ using these basis functions.
Here we shall show that our construction is general, can reproduce 2-D exponentially fitted method previously developed,
and degenerates to the standard conforming Lagrange finite element spaces. In Section \ref{sec:converg} first order convergence
is proved for constructions based on $RT_0^0$. We make conclusion remarks and outline the future work in Section \ref{sec:summary}.
%The convergence analysis of our methods will be reported in
%the forthcoming paper, but the numerical experiments will be shown in Section (\ref{sec:numerical}), where linear
%and quadratic convergence are illustrated.

%\section{Review of Exponential Fitting Methods}

\section{Genetic Multi-dimensional Exponential Fitting} \label{sec:method}

We first introduce notations that will be used in the discussion below. Let $\Omega \in \mathbb{R}^2$
be a bounded domain with Lipschitz continuous boundary. We introduce for $s>0$
the standard Sobolev space $H^s(\Omega)$ to the functions in $\Omega$, and use the standard inner product
$(\cdot, \cdot)_s$, the norm $\norm{\cdot}_s$, and the semi-norm $|\cdot|_s$. The inner product and
the norm in $L^2(\Omega)$ are denoted by $\| \cdot \|$ and $(\cdot,\cdot)$. The subspace of $L^2(\Omega)$ that consists
of functions with zero mean value is denoted by $L^2_0(\Omega)$. With these we define the space $\hdiv$:
$$ H(\div,\Omega):= \left \{ \mathbf{v}: \mathbf{v} \in (L^2(\Omega))^2; \nabla \cdot \vecv \in L^2(\Omega) \right \}$$
equipped with the norm
$$ \norm{\vecv}_{\hdiv} = \left( \norm{\vecv}^2 + \norm{\nabla \cdot \vecv}^2 \right)^{1/2}.$$

On a regular triangulation $\calT_h$ of $\Omega$ we define the finite element space $V_h$
$$ V_h = \{ \vecv \in \hdiv: \vecv|_K \in V_k(K) ~ \forall K \in \calT_h;  ~\vecv \cdot \vecn |_{\partial \Omega} = 0 \},$$
where $\vecn$ is the outer normal direction on the boundary $\partial \Omega$, and $V_k$ is the space of
vector-valued polynomials of degree $k$ on the element $K$. To fulfill the condition of the vanishing normal component on the boundary $\partial \Omega$
one usually chooses $V_k$ to be the classical Raviart-Thomas element of order $k$ ($RT_k$) \cite{RaviartThomasElements} or the Brezzi-Douglas-Marini
element of order $k$ ($BDM_k$) \cite{BDMElements}.
For other $\hdiv$ spaces constructed more recently the readers may refer to \cite{WangJ2007a,ScheichlR2002}.

We will work on the divergence-free subspace $D_h$ of $V_h$, which is defined to be
$$ D_h = \{ \vecv \in V_h: (\nabla \cdot \vecv, q) = 0~ \forall ~ q \in L^2_0(\Omega) \}.$$
It is known \cite{WangJ2007a,mixedandhybridFEM} that if $V_k $ is chosen to be $RT_k$ then
$$\nabla \cdot V_h = \{ q \in L^2_0(\Omega): q|_K = P_k(K) \}$$
or $BDM_k$ then
$$\textcolor{red}{\nabla \cdot V_h = \{ q \in L^2_0(\Omega): q|_K = P_{k-1}(K) \}}$$
where $P_k$ is the space of polynomials of degree $\le k$, suggesting that $D_h$ is indeed exactly
divergence free:
$$ D_h = \{ \vecv \in V_h: \nabla \cdot \vecv = 0 \}.$$
In case that $V_h = RT_k$, this subspace is also denoted $RT_K^0$.

According to the Helmholtz decomposition, any divergence-free vector $\mathbf{v}$ can be given as the curl of a
potential function $\psi$:
\begin{equation} \label{eqn:curl2D}
\mathbf{v} = \curl \psi = (-\psi_y, \psi_x),
\end{equation}
where the subscripts denote the partial derivatives. Eq.(\ref{eqn:curl2D}) suggests that we can construct a divergence-free subspace
by taking the curl of some appropriate space. This is actually possible, as stated by the following well-known result concerning
the $RT_k$ and $BDM_k$ triangular elements \cite{mixedandhybridFEM,WangJ2009a}:
\begin{theorem}
There exists a one-to-one map curl: $S_h \rightarrow D_h$ where the space
$$ S_h = \{ \psi \in H^1_0(\Omega): \psi_K = P_{k+1}(K), ~ K \in \calT_h \}$$
\textcolor{red}{for triangular $RT_k$ elements with $k \ge 0$ or $BDM_k$ elements with $k \ge 1$}. The dimension of
$D_h = RT_k^0$ is equal to
\begin{equation} \label{eqn:dimensionPKRK}
\mathrm{dim} P_{k+1}(K) - 1 = \frac{1}{2}(k+1)(k+4).
\end{equation}
\end{theorem}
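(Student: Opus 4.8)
The plan is to check that $\curl$ is a well-defined map $S_h\to D_h$, read off injectivity from its kernel, and then obtain surjectivity — and with it the stated dimension — by constructing a stream function for each divergence-free field.

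First I would verify that $\curl\psi\in D_h$ for every $\psi\in S_h$. Divergence-freeness is automatic, since $\nabla\cdot\curl\psi=-\psi_{yx}+\psi_{xy}=0$. On each $K$ the components of $\curl\psi=(-\psi_y,\psi_x)$ are polynomials of degree $k$, and as both $RT_k$ and $BDM_k$ contain $(P_k(K))^2$, the restriction lies in $V_k(K)$. For $\hdiv$-conformity the key identity is that across an edge with unit tangent $\mathbf{t}$ one has $\curl\psi\cdot\vecn=-\partial_{\mathbf{t}}\psi$; since $\psi\in H^1_0(\Omega)$ is continuous, its tangential derivative along the edge agrees from both sides, so the normal component of $\curl\psi$ is continuous and $\curl\psi\in\hdiv$. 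The same identity on $\partial\Omega$, together with $\psi|_{\partial\Omega}=0$, yields $\curl\psi\cdot\vecn|_{\partial\Omega}=0$, so indeed $\curl\psi\in D_h$.

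Injectivity is immediate: $\curl\psi=0$ forces $\psi_x=\psi_y=0$, so $\psi$ is locally constant, and continuity together with $\psi|_{\partial\Omega}=0$ on the connected domain $\Omega$ gives $\psi\equiv0$. The substantive step is surjectivity. Given $\vecv=(v_1,v_2)\in D_h$, I would recover $\psi$ by integrating $\nabla\psi=(v_2,-v_1)$. On each element this antiderivative is a polynomial one degree higher than $\vecv$, hence of degree $k+1$; here one uses that a divergence-free element of $RT_k$ actually lies in $(P_k(K))^2$, the $\mathbf{x}$-enriched part being annihilated by the divergence constraint. Across an interior edge the difference of the two elementwise antiderivatives has tangential derivative equal to the jump of $\vecv\cdot\vecn$, which vanishes because $\vecv\in\hdiv$; hence that difference is constant along the edge, and choosing the additive constants consistently around the interior vertices produces a single continuous, piecewise-polynomial $\psi$. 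Finally $\vecv\cdot\vecn|_{\partial\Omega}=0$ lets me fix the remaining constant so that $\psi$ vanishes on $\partial\Omega$, giving $\psi\in S_h$ with $\curl\psi=\vecv$.

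Bijectivity then gives $\dim D_h=\dim S_h$, and the same argument restricted to a single triangle shows $\curl\colon P_{k+1}(K)\to RT_k^0(K)$ is onto with kernel the constants, so $\dim RT_k^0(K)=\dim P_{k+1}(K)-1=\tfrac{(k+2)(k+3)}{2}-1=\tfrac12(k+1)(k+4)$; the restriction $k\ge1$ for $BDM_k$ only reflects that $BDM_0$ is not a standard element. I expect the main obstacle to be the gluing in the surjectivity step: assembling the elementwise antiderivatives into a globally single-valued, continuous, boundary-vanishing $\psi$ requires that $\Omega$ carry no topological obstruction (simple connectedness, i.e. trivial first cohomology), which is precisely where exactness of the discrete de Rham sequence is needed.
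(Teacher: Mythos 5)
Your proof is correct, but there is nothing in the paper to compare it against: the paper states this theorem as a ``well-known result'' and simply cites the references (Brezzi--Fortin's book and Wang--Wang--Ye) for it, offering no proof of its own. What you supply is the standard discrete exact-sequence argument contained in those references: conformity of $\curl\psi$ via the identity $\curl\psi\cdot\vecn=-\partial_{\mathbf{t}}\psi$ combined with interelement continuity of $\psi$; injectivity from connectedness and the zero boundary trace; and surjectivity by elementwise construction of a stream function, glued across edges using continuity of $\vecv\cdot\vecn$. Two of your side remarks deserve emphasis because they are exactly the points that are easy to get wrong. First, the observation that a divergence-free member of $RT_k(K)=(P_k(K))^2+\vecx\,\widetilde{P}_k(K)$ actually lies in $(P_k(K))^2$ (Euler's identity kills the homogeneous enrichment) is what guarantees the local antiderivative lands in $P_{k+1}(K)$; without it the degree count fails. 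Second, surjectivity --- and with it the dimension formula and the one-one correspondence the paper relies on throughout --- genuinely requires $\Omega$ to be simply connected, a hypothesis the paper never states (it assumes only a bounded Lipschitz domain); on a domain with holes, $D_h$ contains discrete harmonic fields with vanishing normal trace that are not curls of $H^1_0$ functions. Your local count $\dim P_{k+1}(K)-1=\frac{(k+2)(k+3)}{2}-1=\frac{1}{2}(k+1)(k+4)$ agrees with the paper's formula and with its explicit examples $RT_0^0$ (dimension $2$) and $RT_1^0$ (dimension $5$).
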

Similar correspondence between $H(\mathrm{curl})$ spaces and the divergence free subspaces $D_h$ in 3-D is
also indicated \cite{DouglasA2000a}. Two sample spaces of $D_h=RT_k^0$ on the reference triangle are given here:
\begin{eqnarray}
RT_0^0 & = & \mathrm{span} \{ (1,0)^T,(0,1)^T \}, \label{eqn:RT00} \\
RT_1^0 & = & \mathrm{span} \{ (x, 1-2x - y)^T, (0, -1+4x)^T, (-x, y)^T, (1-4y,0)^T, (-1+x+2y,-y)^T \}. \label{eqn:RT10}
\end{eqnarray}

We are now in a position to construct the exponentially fitted methods for solving the following boundary
value problem for Eq.(\ref{eqn:driftdiffusioneq_sym}): \textit{Find $\rho \in H^1(\Omega)$ such that}
\begin{equation} \label{eqn:problem}
\begin{split}
 -\nabla \cdot J(\rho) = f  & \qquad \mbox{in} ~ \Omega, \\
 J(\rho) = D e^{-\beta \phi} \nabla \rho = D (\nabla u + \beta u \nabla \phi) & \\
 \rho = g e^{\beta \phi} & \qquad \mbox{on} ~\Gamma_D \subset \partial \Omega, \\
 J(\rho) \cdot \vecn = 0 & \qquad \mbox{on} ~ \Gamma_N = \partial \Omega \setminus \Gamma_D,
\end{split}
\end{equation}
where $\Gamma_D$ and $\Gamma_N$ are the Dirichlet and Neumann subsets of the boundary $\partial \Omega$,
respectively. We denote by $u_h$ and $J_h(u_h) = D(\nabla u_h + \beta u_h \nabla \phi)$ the finite element
approximations of $u$ and $J(u)$. Let $\{ \vecv_i \}$ be the basis of $D_h$ on a triangular element $K$,
then the approximate density current function $J_h(u_h)$ in $K$ can be given as a linear combination of
$\{ \vecv_i \}$:
\begin{equation} \label{eqn:Jexpansion}
J_h(u_h)|_K = \sum_{i=1}^{N_k} c_i \vecv_i,
\end{equation}
where $N_k$ is the dimension of $D_h|_K$. Notice that $J_h(u_h)$ can be given in terms of the Slotboom
variable $\rho_h = u_h e^{\beta \phi}$ as $J_h(\rho_h)= D e^{-\beta \phi} \nabla \rho_h$, we shall have
\begin{equation} \label{eqn:rhoexpansion}
D e^{-\beta \phi} \nabla \rho_h |_K = \sum_{i=1}^{N_k} c_i \vecv_i.
\end{equation}
It remains to find the basis functions $\rho_j$ of the certain finite element space on $K$ in which $\rho_h$ can be
approximated.

\subsection{Attempts to approximate density current exactly in divergence-free spaces} \label{subsect:attempts}
If each basis function $\rho_j$ has the representation
\begin{equation} \label{eqn:rhoexpansion_2}
\nabla \rho_j = \frac{1}{D} e^{\beta \phi} \sum_{i=1}^{N_k} m_{j,i} \vecv_i,
\end{equation}
for some set of real numbers $\{ m_{j,i} \}$, then (i) $\nabla \rho_j$ and $\nabla \rho_h$ will be exactly divergence-free, and (ii)
the right-hand side of Eq.(\ref{eqn:rhoexpansion_2}), as the gradient of a scalar, is curl free.
Consequently, its integration along any curve $l(P_1,P_2)$ connecting two given points in $K$ is path-independent, for
\begin{equation} \label{eqn:path_independent}
\int_{l(P_1,P_2)} \nabla \rho_j \cdot dl = \rho_j(P_2) - \rho_j(P_1).
\end{equation}
For this reason one can choose the path to be the line connecting $(x,y)$ and the starting point $(x_0,y_0)$, c.f. Figure \ref{fig:intgpath}.
Then the basis function $\rho_j(x,y)$ can be computed from
\begin{equation} \label{eqn:rhocomputing}
\rho_j(x,y) = \rho_j(x_0,y_0) + \frac{1}{D} \sum_{i=1}^{N_k} m_{j,i} \int_{0}^{S(x,y)}
e^{\beta \phi(x(s),y(s))} \vecv_i(x(s),y(s)) \cdot \vecn ds,
\end{equation}
where $S(x,y) = \sqrt{(x-x_0)^2 + (y - y_0)^2}$ and $\vecn = (x-x_0, y-y_0)/S$ is the directional vector pointing $(x_0,y_0)$
to $(x,y)$. Consequently,
\begin{equation} \label{eqn:ucomputing}
u_j(x,y) = u_j(x_0,y_0) + \frac{e^{-\beta \phi(x,y)}}{D} \sum_{i=1}^{N_k} m_{j,i}
\int_{0}^{S(x,y)} e^{\beta \phi(x(s),y(s))} \vecv_i(x(s),y(s)) \cdot \vecn ds,
\end{equation}
with \textcolor{red}{$u_j(x_0,y_0) = \rho_j(x_0,y_0)e^{-\beta \phi(x_0,y_0)}$}. These functions $\{ u_j(x,y) \}$ form a basis function of $u_h$ in the element $K$.
On the standard reference triangle one can choose a two-section path $(x_0,y_0) \rightarrow (x,y_0) \rightarrow (x,y)$, each section
parallel to the axes, giving rise to
\begin{equation} \label{eqn:rhocomputing_2}
\rho_j(x,y) = \rho_j(x_0,y_0) + \frac{1}{D} \sum_{i=1}^{N_k} m_{j,i} \left ( \int_{x_0}^x e^{\beta \phi(s,y_0)} \vecv_i^x(s,y_0) ds +
\int_{y_0}^y e^{\beta \phi(x,t)} \vecv_i^y(x,t) dt \right),
\end{equation}
where $(\vecv_i^x, \vecv_i^y)$ are the two components of the vector $\vecv_i$, and correspondingly
\begin{equation} \label{eqn:ucomputing_2}
u_j(x,y) = u_j(x_0,y_0) + \frac{e^{-\beta \phi}}{D} \sum_{i=1}^{N_k} m_{j,i} \left ( \int_{x_0}^x e^{\beta \phi(s,y_0)} \vecv_i^x(s,y_0) ds
+ \int_{y_0}^y e^{\beta \phi(x,t)} \vecv_i^y(x,t) dt \right).
\end{equation}
\begin{figure}[!ht]
\begin{center}
\includegraphics[width=5cm]{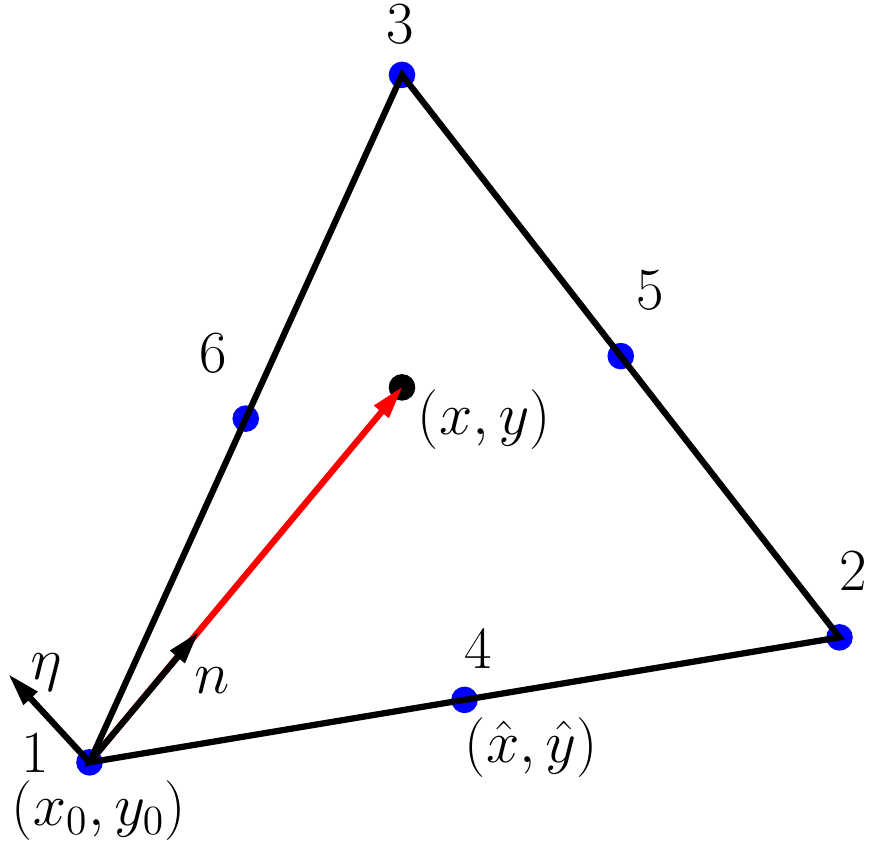} \hspace{1cm}
\caption{One needs to choose the starting point $(x_0,y_0)$ and the path from $(x_0,y_0)$ to $(x,y)$ for
the integrals in Eq.(\ref{eqn:rhocomputing},\ref{eqn:ucomputing}). For $RT_0^0$ one can place $(x_0,y_0)$ at
one vertex and enforce interpolative constraints at all three vertices; or choose one of the edge
centers be the start point and do enforcement at all three edge centers. Different basis functions will
be generated for different paths.
%For $RT_1^0$ one can
%choose any of the six nodes that define $P_2(K)$ to be $(x_0,y_0)$ and enforce constraints at all six nodes.
}
\label{fig:intgpath}
\end{center}
\end{figure}
%Two sample spaces of $D_h=RT_k^0$ in the reference triangle are given here. $RT_0^0 = \mathrm{span} \{ (1,0)^T,(0,1)^T \}$, and
%\begin{equation}
%RT_1^0 = \mathrm{span} \{ (x, 1-2x - y)^T, (0, -1+4x)^T, (-x, y)^T, (1-4y,0)^T, (-1+x+2y,-y)^T \}.
%\end{equation}

To uniquely determine the total $N_k$ coefficients $m_{ji}$ and the constant $u_j(x_0,y_0)$ in the expression (\ref{eqn:ucomputing})
we will enforce the interpolative constraints at $N_k + 1$ nodes. \textcolor{red}{For example, for $D_h = RT_0^0$ on the triangle in
Figure (\ref{fig:intgpath}) we can expand equation (\ref{eqn:ucomputing_2}) with $j=2$ at three vertices $(1,2,3)$ to get the following
\begin{equation} \label{eqn:ucomputing_ex1}
u_2(x,y) = u_2(x_0,y_0) + \frac{e^{-\beta \phi(x_3,y_3)}}{D} \sum_{i=1}^{N_k} m_{1,i} \left ( \int_{x_0}^{x} e^{\beta \phi(s,y_0)} \vecv_i^x(s,y_0) ds
+ \int_{y_0}^{y} e^{\beta \phi(x,t)} \vecv_i^y(x,t) dt \right).
\end{equation}
}

\textcolor{red}{If we choose $(x_1, y_1)$ to be $(x_0,y_0)$ then we must have from the interpolative condition that
\begin{eqnarray} 
0 & = & u_2(x_1,y_1) = u_2(x_0,y_0) + \frac{e^{-\beta \phi(x_1,y_1)}}{D} \sum_{i=1}^{N_k} m_{2,i} \left ( \int_{x_0}^{x_1} e^{\beta \phi(s,y_0)} \vecv_i^x(s,y_0) ds
+ \int_{y_0}^{y_1} e^{\beta \phi(x_1,t)} \vecv_i^y(x_1,t) dt \right), \label{eqn:ucomputing_ex2} \\
1 & = & u_2(x_2,y_2) = u_2(x_0,y_0)+ \frac{e^{-\beta \phi (x_2,y_2)}}{D} \sum_{i=1}^{N_k} m_{2,i} \left ( \int_{x_0}^{x_2} e^{\beta \phi(s,y_0)} \vecv_i^x(s,y_0) ds
+ \int_{y_0}^{y_2} e^{\beta \phi(x_2,t)} \vecv_i^y(x_2,t) dt \right), \label{eqn:ucomputing_ex3} \\
0 & = & u_2(x_3,y_3) = u_2(x_0,y_0) + \frac{e^{-\beta \phi (x_3,y_3)}}{D} \sum_{i=1}^{N_k} m_{2,i} \left ( \int_{x_0}^{x_3} e^{\beta \phi(s,y_0)} \vecv_i^x(s,y_0) ds
+ \int_{y_0}^{y_3} e^{\beta \phi(x_3,t)} \vecv_i^y(x_3,t) dt \right). \label{eqn:ucomputing_ex4}
\end{eqnarray}
}

\textcolor{red}{These three equations form a linear system
\begin{equation} \label{eqn:linearsystem_m2}
\left (
\begin{array}{rrr}
0 & 0 & 1 \\
F_{11} & F_{12} & 1 \\
F_{21} & F_{22} & 1 
\end{array}
\right)
\left (
\begin{array}{r}
m_{21}  \\
m_{22}  \\
u_2(x_0,y_0)
\end{array}
\right)
=
\left (
\begin{array}{r}
0 \\
1  \\
0
\end{array}
\right),
\end{equation}
where $F_{1i}$ and $F_{2i}$ are defind as
\begin{eqnarray}
 F_{1i} & = & \frac{e^{-\beta \phi (x_2,y_2)}}{D} \int_{x_0}^{x_2} e^{\beta \phi(s,y_0)} \vecv_i^x(s,y_0) ds
+ \int_{y_0}^{y_2} e^{\beta \phi(x_3,t)} \vecv_i^y(x_3,t) dt , \quad i \le 2, \label{eqn:Ffunction2} \\
 F_{2i} & = & \frac{e^{-\beta \phi (x_3,y_3)}}{D} \int_{x_0}^{x_3} e^{\beta \phi(s,y_0)} \vecv_i^x(s,y_0) ds
+ \int_{y_0}^{y_3} e^{\beta \phi(x_3,t)} \vecv_i^y(x_3,t) dt , \quad i \le 2. \label{eqn:Ffunction3}
\end{eqnarray}
}

\textcolor{red}{Therefore, with the similar expansions of the other two basis functions $u_1(x,y), u_3(x,y)$, one can arrive at the general linear system below:
\begin{equation} \label{eqn:linearsystem_m}
\left (
\begin{array}{rrr}
0 & 0 & 1 \\
F_{11} & F_{12} & 1 \\
F_{21} & F_{22} & 1 
\end{array}
\right)
\left (
\begin{array}{rrr}
m_{11} & m_{21} & m_{31} \\
m_{12} & m_{22} & m_{32} \\
u_1(x_0,y_0) & u_2(x_0,y_0) & u_3(x_0,y_0)
\end{array}
\right)
=
\mathrm{I},
\end{equation}
where $\mathrm{I}$ is the identity matrix, and
\begin{equation} \label{eqn:Ffunction}
 F_{ji} = \frac{e^{-\beta \phi(x_j,y_j)}}{D}
\int_{0}^{S(x_j,y_j)} e^{\beta \phi(x(s),y(s))} \vecv_i(x(s),y(s)) \cdot \vecn ds, \quad j \le 2.
\end{equation}
}

%Notice that we choose the starting point $(x_0,y_0) = (x_1,y_1)$ for the integrations.
The following theorem proves that the matrix $F$ is invertible for either set of the nodes, \textcolor{red}{i.e., the vertices $(1,2,3)$ or the edge centers $(4,5,6)$}.
\begin{theorem} \label{eqn:thm_interpolative}
The matrix $F$ is nonsingular for the basis $\{ \vecv_i \}$ of $RT_0^0$.
\end{theorem}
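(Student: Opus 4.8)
The plan is to reduce the claimed nonsingularity to a purely geometric fact about the non-degeneracy of the triangle, exploiting the very special structure of the $RT_0^0$ basis. First I would observe that for the vertex node set the full $3\times 3$ coefficient matrix in (\ref{eqn:linearsystem_m}) has first row $(0,0,1)$, so expanding its determinant along that row gives
$$\det\begin{pmatrix} 0 & 0 & 1 \\ F_{11} & F_{12} & 1 \\ F_{21} & F_{22} & 1\end{pmatrix} = F_{11}F_{22} - F_{12}F_{21} = \det F,$$
where $F = (F_{ji})_{j,i\le 2}$ is the relevant $2\times 2$ block. Hence it suffices to show $\det F \neq 0$, which is exactly what makes the basis-function system (\ref{eqn:linearsystem_m}) uniquely solvable.

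Next I would compute the entries $F_{ji}$ from (\ref{eqn:Ffunction}) explicitly. The key simplification, and the reason the statement is restricted to $RT_0^0$, is that the basis vectors $\vecv_1 = (1,0)^T$ and $\vecv_2 = (0,1)^T$ are \emph{constant}. Along the straight-line path from the start vertex $(x_1,y_1)$ to a target vertex $(x_j,y_j)$ the unit direction $\vecn$ is itself constant, so the dot products $\vecv_1\cdot\vecn$ and $\vecv_2\cdot\vecn$ are just the two components of $\vecn$ and pull straight out of the integral. Setting $S_j$ to be the distance from $(x_1,y_1)$ to $(x_j,y_j)$, $I_j = \int_0^{S_j} e^{\beta\phi(x(s),y(s))}\,ds$, and $a_j = e^{-\beta\phi(x_j,y_j)} I_j/(D S_j)$, this yields the one-edge-vector-per-row structure
$$F = \begin{pmatrix} a_2(x_2 - x_1) & a_2(y_2 - y_1) \\ a_3(x_3 - x_1) & a_3(y_3 - y_1)\end{pmatrix}.$$

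The conclusion then follows by factoring the scalars out of each row:
$$\det F = a_2 a_3 \bigl[(x_2 - x_1)(y_3 - y_1) - (x_3 - x_1)(y_2 - y_1)\bigr],$$
and recognizing the bracketed quantity as twice the signed area of the triangle with vertices $1,2,3$. Since $\calT_h$ is a regular triangulation this triangle is non-degenerate, so the bracket is nonzero; and because $e^{\beta\phi} > 0$ we have $I_j > 0$, hence $a_2, a_3 > 0$, giving $\det F \neq 0$. For the edge-center node set $(4,5,6)$ I would run the identical computation with the three midpoints in place of the vertices: they form the medial triangle, which is similar to the original (scaled by $1/2$) and therefore also non-degenerate, so the same factorization gives the result.

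The computation carries essentially no analytic difficulty. The only two points requiring care are that the positivity of $e^{\beta\phi}$ guarantees $I_j > 0$ so the scalars $a_j$ never vanish, and that the constancy of $\vecv_i$---peculiar to $RT_0^0$---is precisely what lets $\vecv_i\cdot\vecn$ leave the integral and collapse each row of $F$ onto the corresponding edge vector. Thus the main conceptual obstacle is not an estimate but the initial reduction: confirming that the object whose nonsingularity the theorem asserts is the $2\times 2$ block $F$, whose determinant, after the factorization above, is nothing but a positive multiple of the triangle's area.
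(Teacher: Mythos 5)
Your proposal is correct and follows essentially the same route as the paper's own proof: both exploit the fact that the constant $RT_0^0$ vectors and straight integration paths let the directional factor $\vecv_i\cdot\vecn$ pull out of the exponentially weighted integral, so each row of $F$ is a positive scalar multiple of an edge vector from the start node, and nonsingularity follows from the non-parallelism of the two edges (equivalently, the nonzero signed area you exhibit). The only cosmetic difference is that the paper phrases this as triviality of the kernel of the linear maps $\mathcal{F}_j$ rather than as a determinant factorization, and it likewise disposes of the edge-center node set by the same symmetry observation you make.
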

\begin{proof}
We notice that every component $F_{ji}$ represents a linear transformation of $\vecv_i$, i.e.,
$F_{ji} = \mathcal{F}_{j}(\vecv_i)$ where $\mathcal{F}_{j}: D_h|_K \rightarrow \mathbb{R}$.
Consequently it is sufficient to prove that $\mathcal{F}_j(\vecv)=0$ for $j=1,2$ if and only if $\vecv = 0$.
We only need to prove that if $\mathcal{F}_j(\vecv)=0$ for $j=1,2$ then $\vecv=0$, as the reverse statement
is trivial.  We denote $\vecv$ by $\vecv = (a,b)$ because any $\vecv \in D_h|_K$ is a constant vector. If
the vertices are chosen to be the node set for enforcing the interpolative constraints, we have
\begin{eqnarray} \label{eqn:F_nonsingular}
\mathcal{F}_1(\vecv) & = & \frac{e^{-\beta \phi(x_2,y_2)}}{D} \int_{0}^{S(x_2,y_2)} e^{\beta \phi(x(s),y(s))} (a n_2 - b n_1) ds =
p(a n_2 - b n_1), \label{eqn:F_nonsingular_1} \\
\mathcal{F}_2(\vecv) & = & \frac{e^{-\beta \phi(x_3,y_3)}}{D} \int_{0}^{S(x_3,y_3)} e^{\beta \phi(x(s),y(s))} (a n_4 - b n_3) ds =
\textcolor{red}{q(a n_4 - b n_3)}, \label{eqn:F_nonsingular_2}
\end{eqnarray}
where $(n_1,n_2)$ and $(n_3,n_4)$ are the unit vectors in the direction $(x_1,y_1) \rightarrow (x_2,y_2)$ and
$(x_1,y_1) \rightarrow (x_3,y_3)$, respectively, and $p,q$ are constants. It following immediately that if
Eqs.(\ref{eqn:F_nonsingular_1},\ref{eqn:F_nonsingular_2}) are both equal to zero then $a=b=0$ since the vectors
$(n_1,n_2)$ and $(n_3,n_4)$ are not parallel. Similar result holds true if the edge nodes are chosen for 
enforcing the interpolative constraints, with $(\hat{x},\hat{y})$ being the start point of the integral path.
\end{proof}

It turned out that parameters $\{ m_{ji} \}$ determined as such do not fit the expansion (\ref{eqn:rhoexpansion_2}). Indeed,
if the curl-free nature of $\displaystyle{\sum_{i=1}^{N_k} m_{ji} \vecv_i }$ is enforced along with the interpolative
constraints we will have an over-determined problem for $m_{ji}$, suggesting that one can not find a set of interpolative
basis functions for $u_h$ that constitute a constant density current. Since this curl-free condition is not satisfied,
different basis functions $\rho_j, u_j$ can be generated if different paths are chosen for the integrations.
Figure (\ref{fig:difference}) shows the difference between the two basis functions that are obtained along different
integral paths. The differences on the edges suggest the basis functions on neighboring triangles are discontinuous in general. 
The finite element space is hence nonconforming. In addition, this method can not be generalized to construct high order exponentially 
fitted method. For example, direct computation on $D_h = RT_1^0$ in the reference triangle with $\phi = 0$, along the integration 
path $(0,0) \rightarrow (x,0) \rightarrow (x,y)$, gives rise to a singular matrix $F$:
\begin{equation*}
\left (
\begin{array}{rrrrrr}
 %0.125 &  0.0  &  -0.125 &   0.5 &  -0.375 & 1 \\
 %0.500   &  0.0  &  -0.500  &  1 &  -0.500 & 1 \\
 %0.250  &  0.0  &    0.000  &  0 & -0.250 & 1 \\
 %0.500   & -1.0  &  0.500  &   0 &  -0.500 & 1 \\
 %0.375 & -0.5 &   0.125 &  0 &  -0.125 & 1 \\
 1/8  &  0    &  -1/8  &  1/2 &  -3/8 & 1 \\
 1/2  &  0    &  -1/2  &  1   &  -1/2 & 1 \\
 1/4  &  0    &     0  &  0   &  -1/4 & 1 \\
 1/2  & -1    &   1/2  &  0   &  -1/2 & 1 \\
 3/8  & -1/2  &   1/8  &  0   &  -1/8 & 1 \\
0 & 0 & 0 & 0 & 0 & 1
\end{array}
\right).
\end{equation*}
\begin{figure}[!ht]
\begin{center}
\includegraphics[width=5cm]{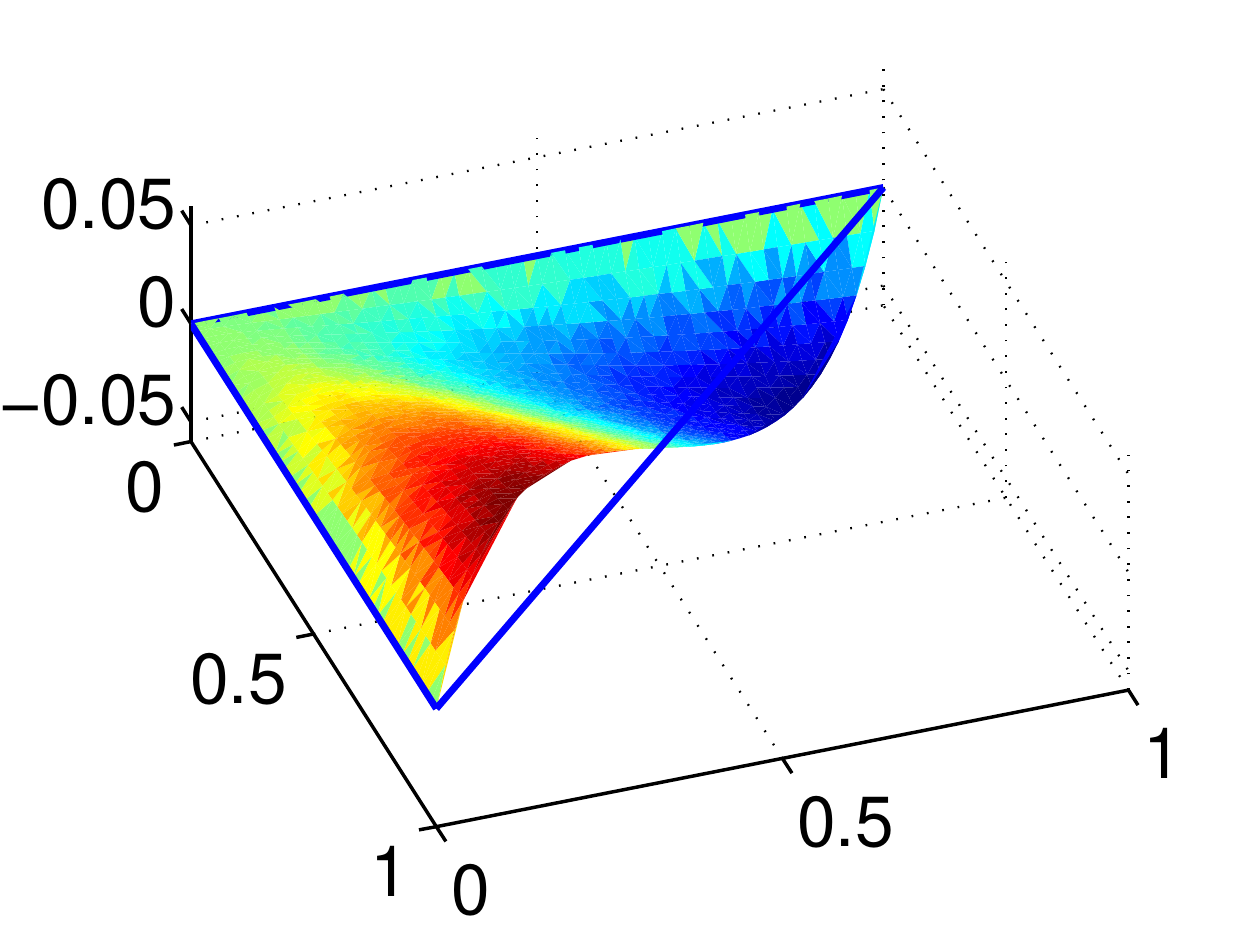} \hspace{1cm}
\includegraphics[width=5cm]{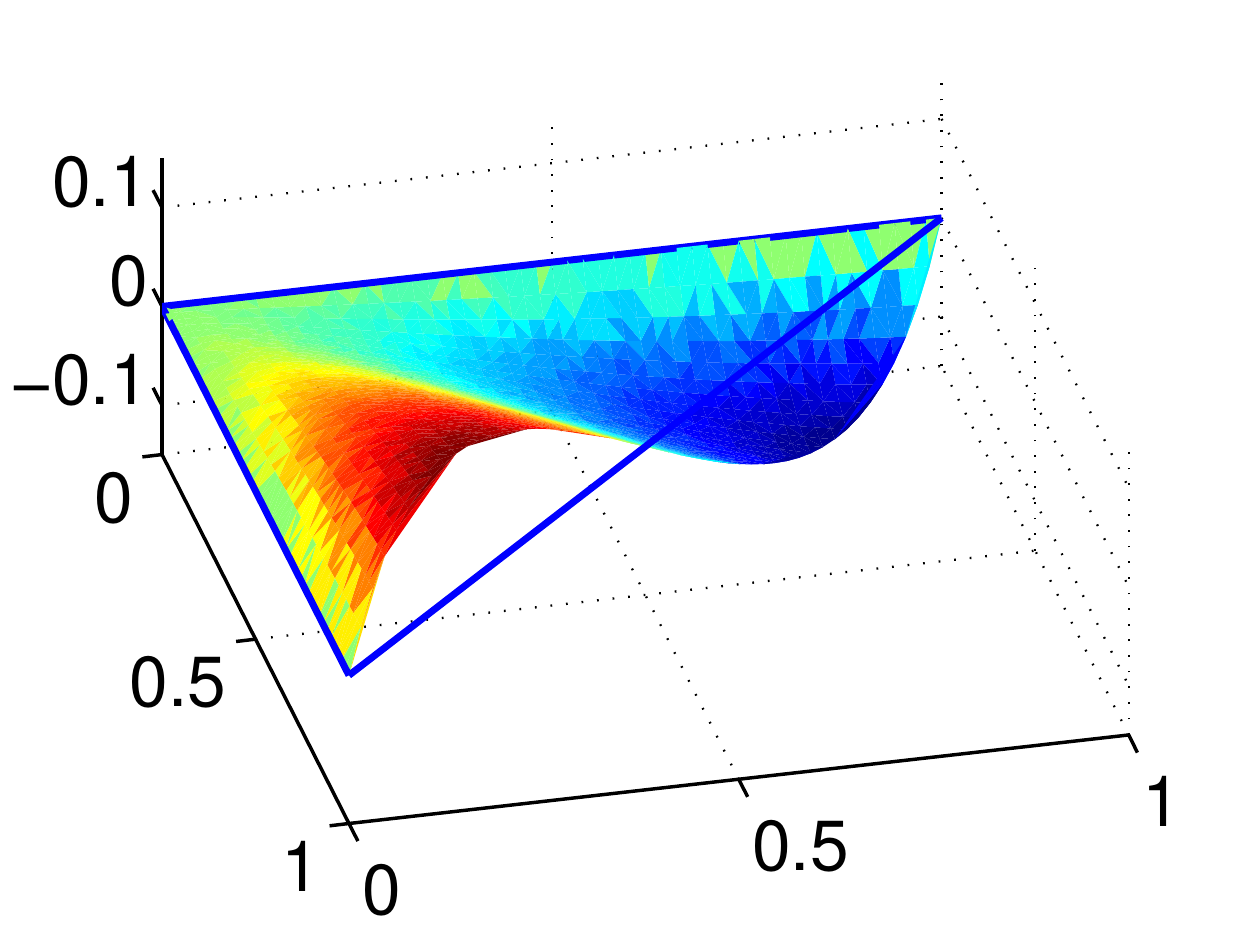}
\caption{Different basis functions $u_j$ can be obtained by choosing different integral path $s$. Left: Difference of $u_1$ in
Eq.(\ref{eqn:ucomputing}) computed along $s$ being $(0,0) \rightarrow (x,0) \rightarrow (x,y)$ and $(0,0) \rightarrow (0,y) \rightarrow (x,y)$.
Right: Difference of $u_1$ computed through (\ref{eqn:rhocomputing_new}) and (\ref{eqn:rhocomputing_new_2}).
}
\label{fig:difference}
\end{center}
\end{figure}

\begin{remark}
A divergence-free finite element space
$$\mathbb{DF}(K) = \mathrm{span} \{ 1, e^{\nabla \phi \cdot \vecx}, (\nabla \phi \times \vecx) \cdot \vece_3 \}$$
is constructed in \cite{SaccoR1999a}, by assuming local linear approximations of the electric potential
and the density current $J_h$. The six parameters of the latter and one constant in $u_j$ are determined by applying 
seven conditions, namely one divergence-free condition, three linear conditions arising in the curl-free nature of
$\nabla \rho_h$, and three interpolation constraints. The linear approximation of the density current provides more
degrees of freedom than the constant approximation in $RT_0^0$ presented above. These additional degrees of freedom
make it possible to enforce the curl-free condition along with the interpolative constraints.
The component $(\nabla \phi \times \vecx) \cdot e_z $ of space allows the simulation of the diffusion flow orthogonal to
the electric field lines. Interestingly, we can prove that the nonconforming space $\mathbb{DF}(K)$ is a special case of our
construction. \textcolor{red}{Consider the standard reference triangle and let $D_h|_K = RT_0^0$ then $D_h|_K = \mathrm{span}\{ (1,0)^T,(0,1)^T \}$. 
Let Path 1 be defined by $(1,0) \rightarrow (x,0) \rightarrow (x,y)$ and Path 2 be defined by $(1,0) \rightarrow (0,y) \rightarrow (x,y)$.
Note that by averaging the results for the two symmetric paths we will arrive at the final solution $u_j(x,y)$ and if we assume the same 
linear potential in the element $\phi|_K = ax + by + c$, then for Path 1 we have}
\textcolor{red}{
\begin{eqnarray}
u_j^1(x,y) & = & u_j(x_0,y_0) + \frac{e^{-\beta \phi}}{D} \sum_{i=1}^{N_k} m_{j,i} \left ( \int_{x_0}^x e^{\beta \phi(s,y_0)} \vecv_i^x(s,0) ds
+ \int_{y_0}^y e^{\beta \phi(x,t)} \vecv_i^y(x,t) dt \right) \nonumber \\
 & = & u_j(x_0,y_0) + \frac{e^{-\beta \phi}}{D} \left (  m_{j,1} \int_{x_0}^x e^{\beta \phi(s,y_0)} ds + m_{j,2}
 \int_{y_0}^y e^{\beta \phi(x,t)}  dt \right) \nonumber \\
 & = & u_j(x_0,y_0) + \frac{e^{-\beta \phi}}{D}\left( m_{j,1} \left (\frac{1}{\beta a} e^{\beta(ax+c)}-\frac{1}{\beta a} e^{\beta c} \right) +  m_{j,2} \left (\frac{1}{\beta b} e^{\beta(ax+by+c)}-\frac{1}{\beta b} e^{\beta(ax+c)} \right )\right) \nonumber \\
 & = & u_j(x_0,y_0) +  \frac{m_{j,2}}{\beta b D} + e^{-\beta b y} \left( \frac{m_{j,1}}{\beta a D} - \frac{m_{j,2}}{\beta b D}\right) - \frac{m_{j,1}}{\beta a D}e^{-\beta(ax+by)}.
\end{eqnarray}
For Path 2 we have
\begin{eqnarray}
u_j^2(x,y) & = & u_j(x_0,y_0) + \frac{e^{-\beta \phi}}{D} \sum_{i=1}^{N_k} m_{j,i} \left ( \int_{y_0}^y e^{\beta \phi(x_0,s)} \vecv_i^x(0,s) ds
+ \int_{x_0}^x e^{\beta \phi(t,y)} \vecv_i^y(t,y) dt \right) \nonumber \\
 & = & u_j(x_0,y_0) + \frac{e^{-\beta \phi}}{D} \left (  m_{j,1} \int_{y_0}^y e^{\beta \phi(x_0,s)} ds + m_{j,2}
 \int_{x_0}^x e^{\beta \phi(t,y)}  dt \right) \nonumber \\
 & = & u_j(x_0,y_0) + \frac{e^{-\beta \phi}}{D}\left( m_{j,1} \left (\frac{1}{\beta b} e^{\beta(by+c)}-\frac{1}{\beta b} e^{\beta c} \right) +  m_{j,2} \left (\frac{1}{\beta a} e^{\beta(ax+by+c)}-\frac{1}{\beta a} e^{\beta(by+c)} \right )\right) \nonumber \\
 & = & u_j(x_0,y_0) +  \frac{m_{j,2}}{\beta a D} + e^{-\beta ax} \left( \frac{m_{j,1}}{\beta b D} - \frac{m_{j,2}}{\beta a D}\right) - \frac{m_{j,1}}{\beta b D}e^{-\beta(ax+by)}.
\end{eqnarray}
}

\textcolor{red}{Assuming that the integration in Eq. (\ref{eqn:Ffunction}) is path-independent, then the results from Path 1 and Path 2 are the same, and equal to their average
\begin{eqnarray}
u_j(x,y) &=& \frac{u_j^1(x,y)+u_j^2(x,y)}{2} \nonumber \\
         & =&  u_j(x_0,y_0) +  \frac{m_{j,2}}{2\beta D}\left(\frac{1}{a}+ \frac{1}{b} \right) 
+  e^{-\beta b y} \left( \frac{m_{j,1}}{2\beta a D} - \frac{m_{j,2}}{2\beta b D}\right) \nonumber \\
         &+& e^{-\beta ax} \left( \frac{m_{j,1}}{2\beta b D} -\frac{m_{j,2}}{2\beta a D}\right)- m_{j,1} \left ( \frac{1}{2\beta b D}+\frac{1}{2\beta a D} \right)e^{-\beta(ax+by)}
 \label{eqn:Uexpansion}
\end{eqnarray}
Applying the approximation $e^{x} \approx 1 + x$ we arrive at
\begin{eqnarray}
u_j(x,y)  &\approx&  u_j(x_0,y_0) +  \frac{m_{j,2}}{2\beta D}\left(\frac{1}{a}+ \frac{1}{b} \right) + \left( \frac{m_{j,1}}{2\beta a D} - \frac{m_{j,2}}{2\beta b D}\right) 
+ \left( \frac{m_{j,1}}{2\beta b D} - \frac{m_{j,2}}{2\beta a D}\right)  \nonumber \\
 & - & m_{j,1} \left ( \frac{1}{2\beta b D}+\frac{1}{2\beta a D} \right)e^{-\beta(ax+by)}    
          - \beta by \left( \frac{m_{j,1}}{2\beta a D} - \frac{m_{j,2}}{2\beta b D}\right) - \beta a x \left( \frac{m_{j,1}}{2\beta b D} - \frac{m_{j,2}}{2\beta a D}\right) \nonumber \\
 & = & \alpha_j + \eta_j e^{-\beta \nabla \phi \cdot \vecx} + \gamma_j (\nabla \phi \times \vecx) \cdot \vece_3,
 \label{eqn:Uequiv_1}
\end{eqnarray}
for some reorganized constants $\alpha_j, \eta_j, \gamma_j$.} This last
formula (\ref{eqn:Uequiv_1}) is of the exact same form as in \cite{SaccoR1999a}. However, the restrictive assumption $a,b \ne 0$
in the local linear representation of the electric potential is no longer necessary in our construction here because we do not
compute the analytical integration of the function $e^{\beta \phi}$ and therefore the analytical form of the potential $\phi$
is not needed. When $u_j$ of the representation in Eq.(\ref{eqn:Uexpansion}) is used to compute the current $J_h$, the first term
will produce a current component in the direction of the electric field; the second term does not contribute to $J_h$, and the last
two terms will provide additional fluxes that are not necessarily aligned with the electric field. Eq.(\ref{eqn:Uequiv_1}) indicates
that these additional fluxes are either not necessarily orthogonal to the electric field in general. Under very special circumstances,
for instances $a=b$ in Eq.(\ref{eqn:Uexpansion}), one can solve the same $m_{j,1}, m_{j,2}$, and this crosswind diffusion flux will
disappear in Eq.(\ref{eqn:Uexpansion}) as it will be absorbed into the second term.

It is worth noting the divergence-free condition was only approximately enforced to get a constant current field in \cite{SaccoR1995a}.
\end{remark}

\subsection{General construction of exponentially fitted methods by giving up divergence-free condition}

We would note that it is not fully justified to enforce the divergence-free condition in constructing the exponentially fitted finite
element space. Divergence-free condition is seldom enforced in solving the Laplace equation, which is the limit of the drift-diffusion
equation (\ref{eqn:driftdiffusioneq}) at the limit of a vanishing electric field and with $f(x)=0$. Divergence-free condition shall not
be used for solving drift-diffusion equations with inhomogeneous source function $f(x)$, or unsteady drift-diffusion equations.

We will give up the divergence-free condition to seek an alternative construction. Our construction for the drift-diffusion equation
must regenerate the standard finite element space $P_{k+1}$ for solving the Poisson equation at the limit of vanishing electric field.
These standard basis functions $\psi_j$ of $P_{k+1}$ can be derived from the known basis of $D_h$ through the following correspondence
\begin{equation} \label{eqn:poisson_antiderivation}
\left (-\frac{\partial \psi_j}{\partial y}, \frac{\partial \psi_j}{\partial x} \right)^T = \sum_{i=1}^{N_k} m_{ji} \vecv_i
\end{equation}
and the enforcement of the interpolative constraints. We are motivated to construct the basis $\rho_j$ for $\rho_h$ in a
similar manner
\begin{equation} \label{eqn:driftdiffusion_antiderivation}
\left (-\frac{\partial \rho_j}{\partial y}, \frac{\partial \rho_j}{\partial x} \right)^T = \frac{e^{\beta \phi}}{D}
\sum_{i=1}^{N_k} m_{ji} \vecv_i.
\end{equation}
On the reference triangle $\hat{K}$, we can evaluate $\rho_j$ at an arbitrary point $(x,y)$ via different paths
\begin{equation} \label{eqn:rhocomputing_new}
\rho_j(x,y) = \rho_j(x_0,y_0) + \frac{1}{D} \sum_{i=1}^{N_k} m_{j,i} \left ( \int_{x_0}^x e^{\beta \phi(s,y_0)} \vecv_i^y(s,y_0) dt
- \int_{y_0}^y e^{\beta \phi(x,t)} \vecv_i^x(x,t) dt \right),
\end{equation}
or
\begin{equation} \label{eqn:rhocomputing_new_2}
\rho_j(x,y) =  \rho_j(x_0,y_0) + \frac{1}{D} \sum_{i=1}^{N_k} m_{j,i} \left ( \int_{x_0}^x e^{\beta \phi(s,y)} \vecv_i^y(s,y) dt
- \int_{y_0}^y e^{\beta \phi(x_0,t)} \vecv_i^x(x_0,t) dt \right).
\end{equation}
The expansion coefficients $m_{ji}$ and the constants $\rho_j(x_0,y_0)$  can be solved from the following linear system
similar to (\ref{eqn:linearsystem_m})
\begin{equation} \label{eqn:F_standard}
\left (
\begin{array}{rrrr}
\ddots &        &        & 1      \\
       & F_{ij} &        & \vdots \\
       &        & \ddots & 1      \\
 0     & \cdots &  0     & 1
\end{array}
\right )
\left (
\begin{array}{cccc}
\ddots &        &        &  m_{N_k+1,1}     \\
       & m_{ji} &        &   \vdots     \\
       &        & \ddots &  m_{N_k+1,N_k}   \\
 \rho_1(x_0,y_0) & \cdots &  \rho_{N_k}(x_0,y_0)   & \rho_{N_k+1}(x_0,y_0)
\end{array}
\right ) = I_{(N_k+1) \times (N_k+1)}
\end{equation}
resulting from the enforcement of the interpolative constraints at selected node sets, with
\begin{equation} \label{eqn:Fij_new_phi}
F_{ji} = \frac{1}{D} \left ( \int_{x_0}^{x_j} e^{\beta \phi(s,y_0)} \vecv_i^y(s,y_0) dt
- \int_{y_0}^{y_j} e^{\beta \phi(x_j,t)} \vecv_i^x(x_j,t) dt \right) , \quad 1 \le j \le N_k+1, ~ 1 \le i \le N_k,
\end{equation}
We can then evaluate
\begin{equation} \label{eqn:ucomputing_3}
u_j(x,y) = \rho_j(x,y) e^{-\beta \phi(x,y)}.
\end{equation}
Proper scaling can make $u_j(x,y)$ to be interpolative as well.
It turns out in general that $N_k + 1 = \mathrm{dim}P_{k+1}(K)$, meaning the dimension of the finite element space for $u_h|_K$ corresponding to the divergence-free
subspace $D_h|_K = RT_k^0|_K$ is equal to the dimension of the space $S_h|_K=P_{k+1}(K)$ whose curl produces $D_h|_K$. The one-one correspondence between $RT_k^0$
and $P_{k+1}$ indicates that the matrix $F$ corresponding to Eq.(\ref{eqn:poisson_antiderivation}), i.e., $F$ with
\begin{equation} \label{eqn:Fij_new_nophi}
F_{ji} = \int_{x_0}^{x_j} \vecv_i^y(s,y_0) dt - \int_{y_0}^{y_j} \vecv_i^x(x_j,t) dt
\end{equation}
must be invertible. We shall note that the finite element spaces constructed here are nonconforming and depend on the integral path,
similar to the spaces obtained in subsection \ref{subsect:attempts}, because the solved parameters $m_{ji}$ may not fit
Eq.(\ref{eqn:driftdiffusion_antiderivation}). They do fit when $\phi=0$.

%It is possible to choose different set of nodes for this
%enforcement. For instance, we can choose the same set of nodes used for defining conforming Lagrange triangular element of
%order $k$, or those for nonconforming Crouzeix-Raviart triangular element. Irrespective of the choice of the node set, the
%$u_h|_K$ spanned by such computed $\{ u_j \}$ will be nonconforming, because $u_j(x,y)$ are continuous only at the selected nodes,
%and are generally discontinuous at the edges.
Two examples of the node set for the finite element space $u_h|_K=\mathrm{span} \{ u_j \}$ constructed on $D_h| = RT_0^0$ on the
reference triangle are shown in Figure \ref{fig:Uelement_example}. For a weak potential $\phi =  e^{-2\sqrt{x^2 + y^2}}$ the
exponentially fitted basis functions are very similar to the basis of $P_1(\hat{K})$ at the same set of nodes, c.f.
Figure \ref{fig:basisAN} and \ref{fig:basisBN}. The nature of exponential fitting is more clearly illustrated when the electric field
is strong, c.f. Figure \ref{fig:basisANS} and \ref{fig:basisBNS}. It can be seen that the finite element spaces spanned by $\rho_j, u_j$
are invariant with respect to affine linear maps, regardless of the choice of node set. This feature is of particular importance
to the estimation of interpolation errors through mappings to the reference triangles. The basis functions $u_j$ constructed
on $D_h = RT_1^0$ with $\phi = \exp^{-4\sqrt{x^2 + y^2}}$ are shown in Figure \ref{fig:basisO2}, where the characters of
interpolation at nodes and exponential fitting are also well depicted. The method developed here thus provides for the first time a
feasible approach to systematically construct high-order exponentially fitted methods for solving 2-D drift-diffusion equation.

\begin{figure}[!ht]
\begin{center}
\includegraphics[width=5cm]{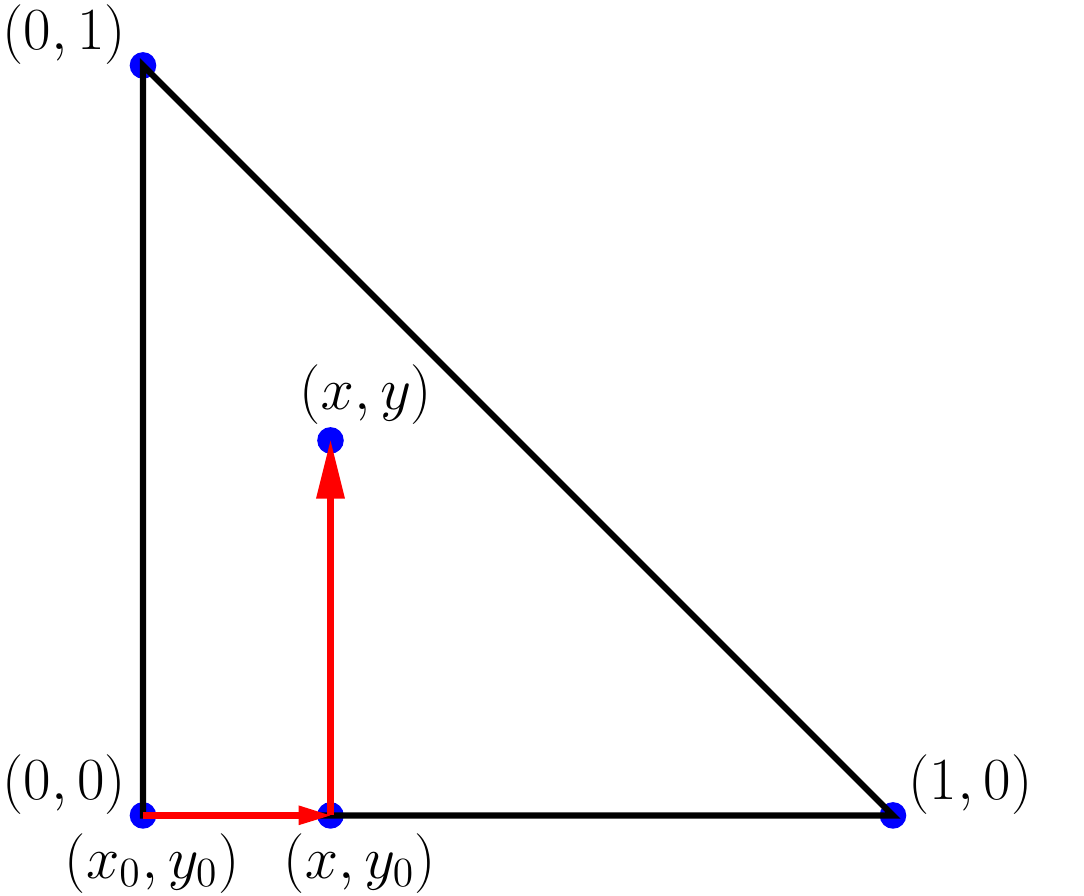} \hspace{1cm}
\includegraphics[width=5cm]{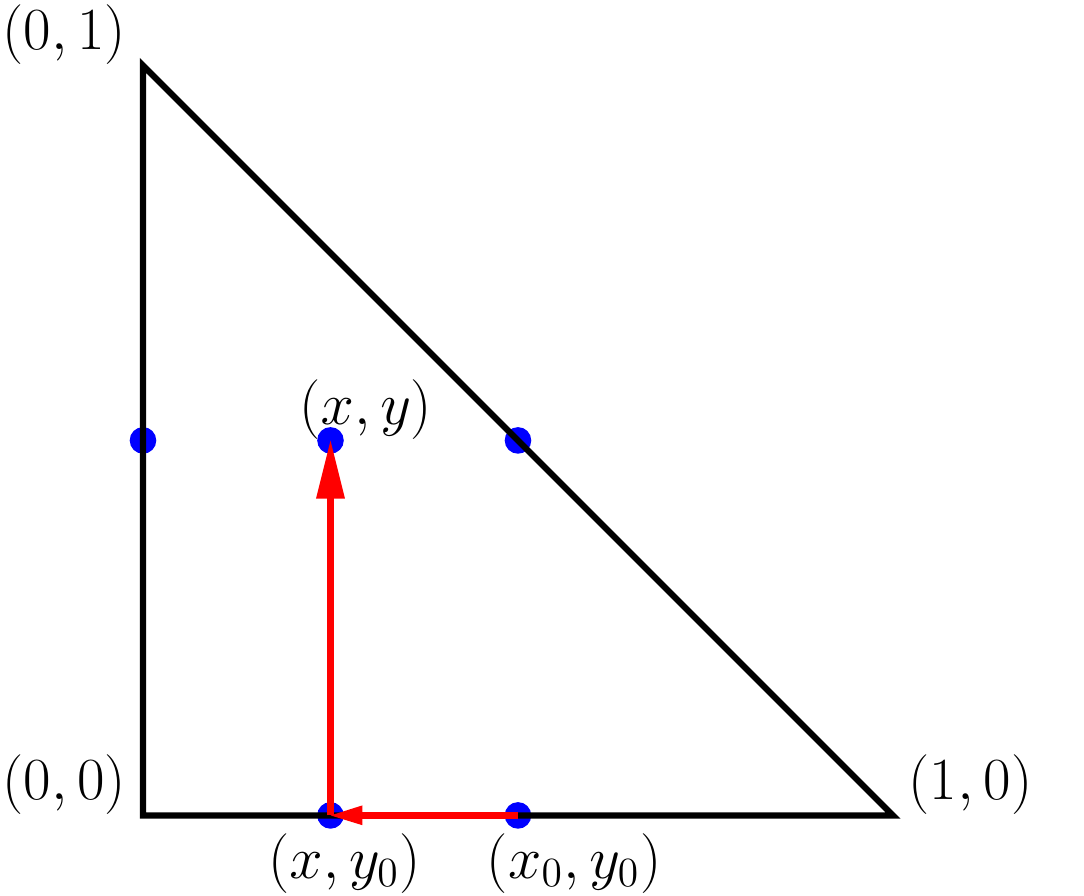}
\caption{Difference node sets are chosen for enforcing the interpolative constraints for $D_h = RT_0^0$, giving rise to different
basis functions of $u_h$. Left: triangle vertices; Right: Edge nodes. The starting point $(x_0,y_0)$ and the path for the integration
vary accordingly.}
\label{fig:Uelement_example}
\end{center}
\end{figure}

\begin{figure}[!ht]
\begin{center}
\includegraphics[width=5cm]{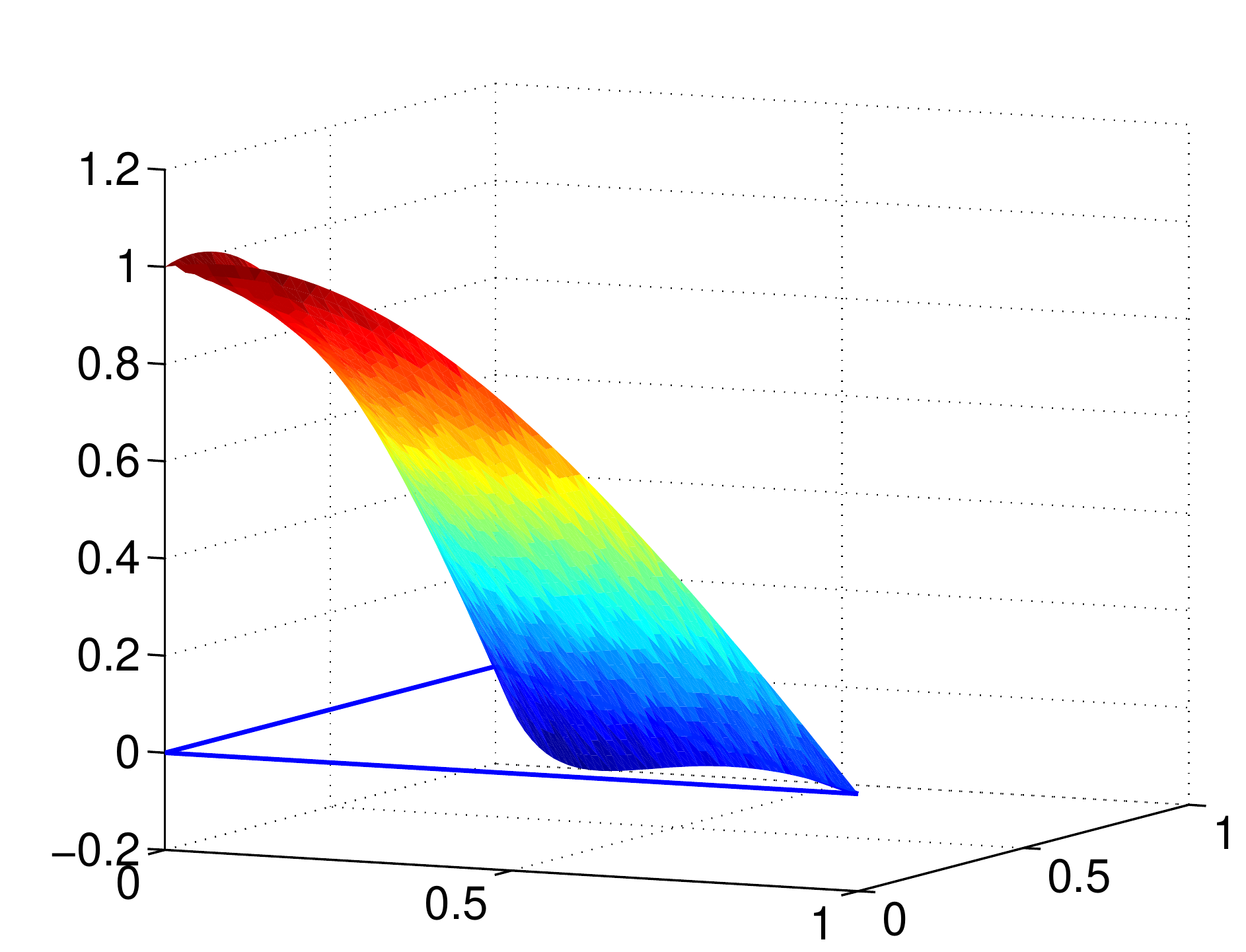}
\includegraphics[width=5cm]{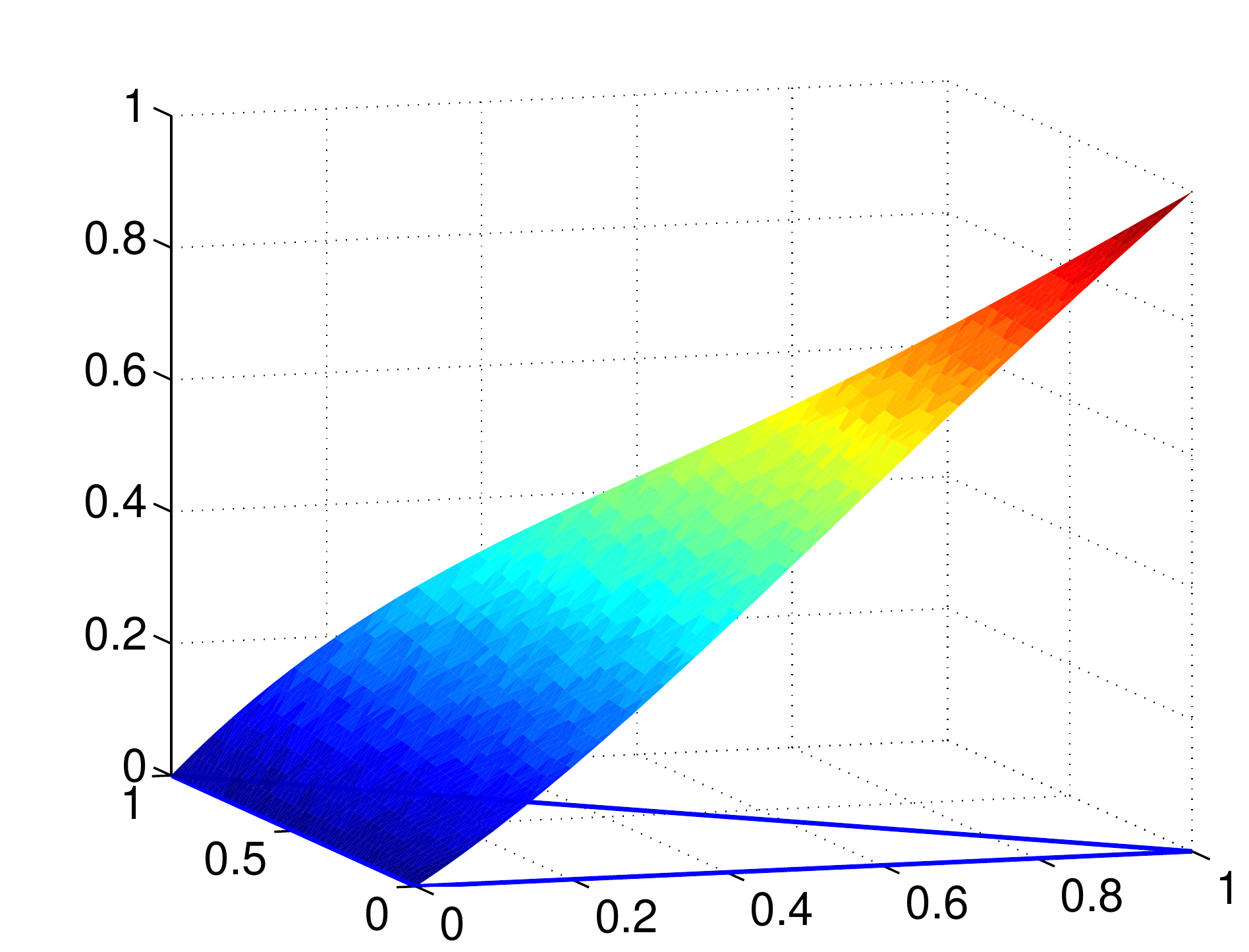}
\includegraphics[width=5cm]{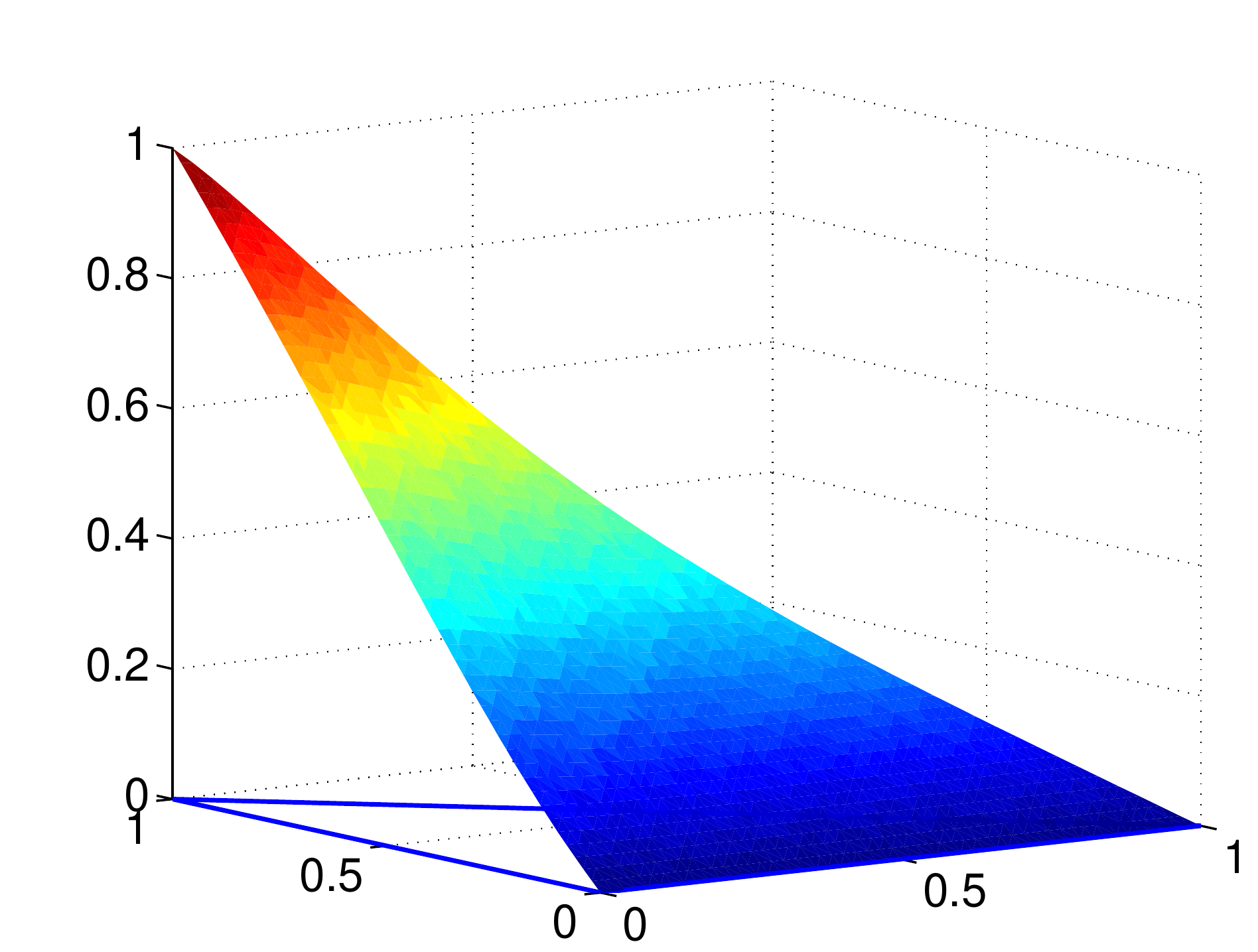}
\caption{Basis functions $u_j$ corresponding to $RT_0^0$ with interpolative constraints enforced at the three vertices. $\phi = e^{-2\sqrt{x^2 + y^2}}$.}
\label{fig:basisAN}
\end{center}
\end{figure}

\begin{figure}[!ht]
\begin{center}
\includegraphics[width=5cm]{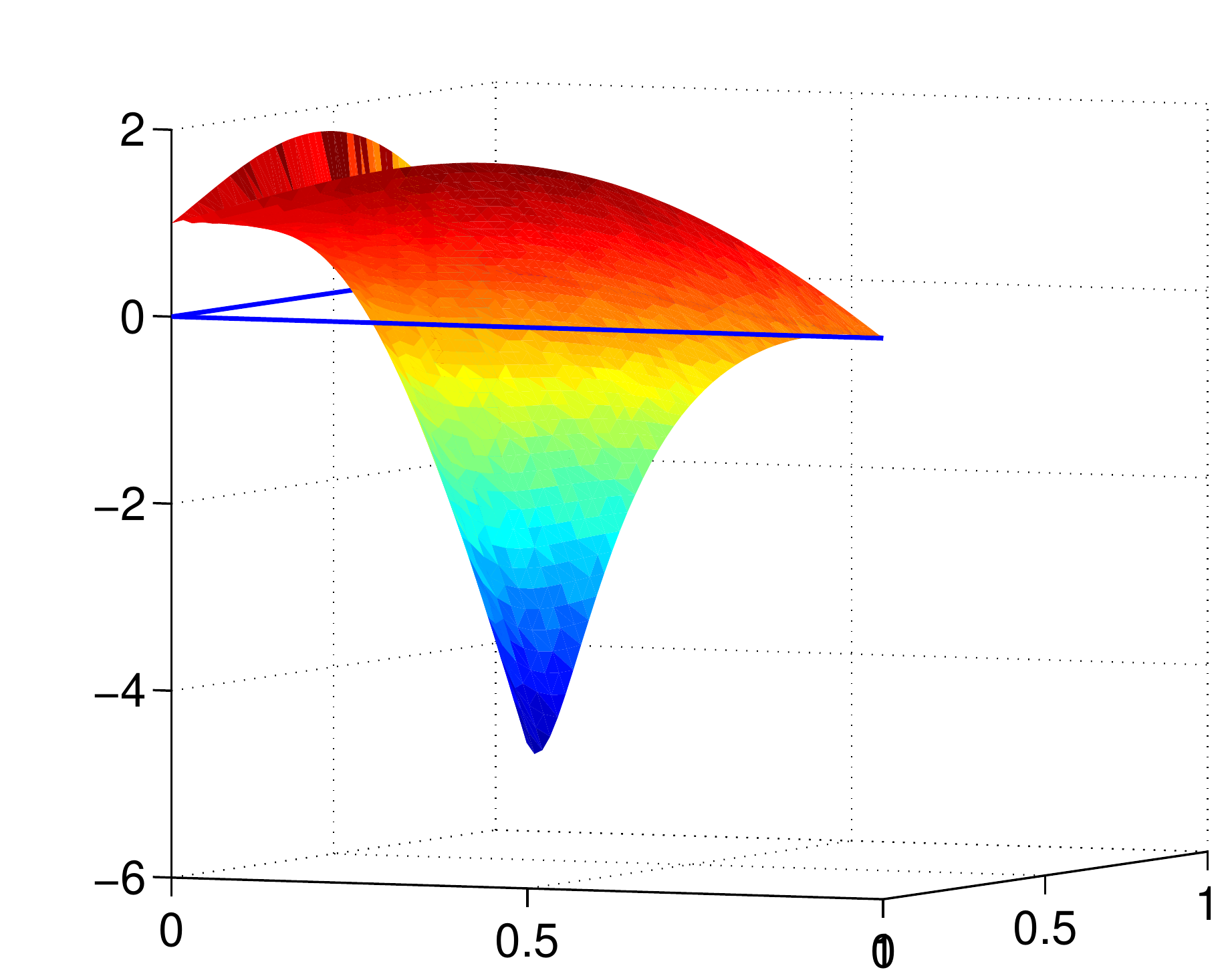}
\includegraphics[width=5cm]{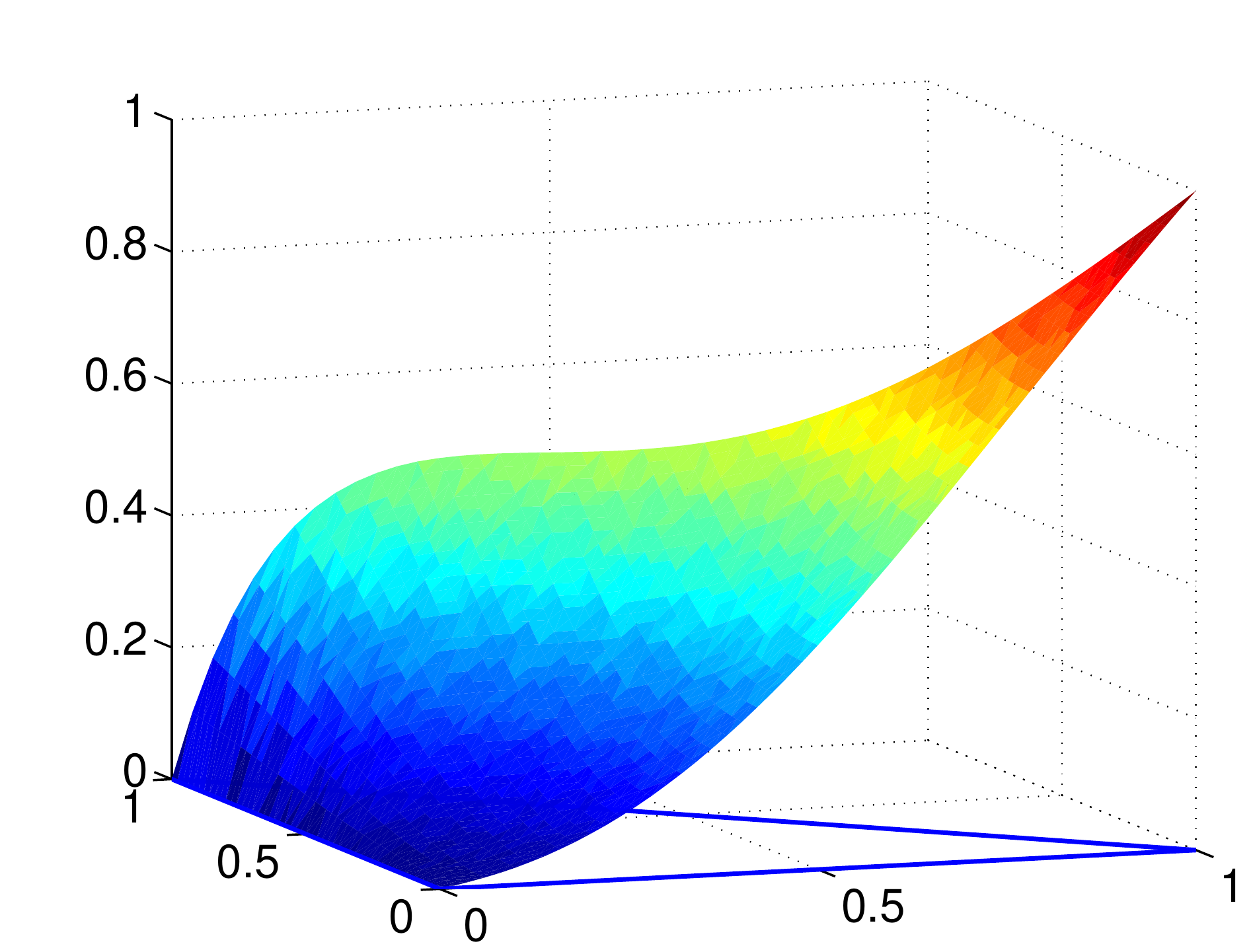}
\includegraphics[width=5cm]{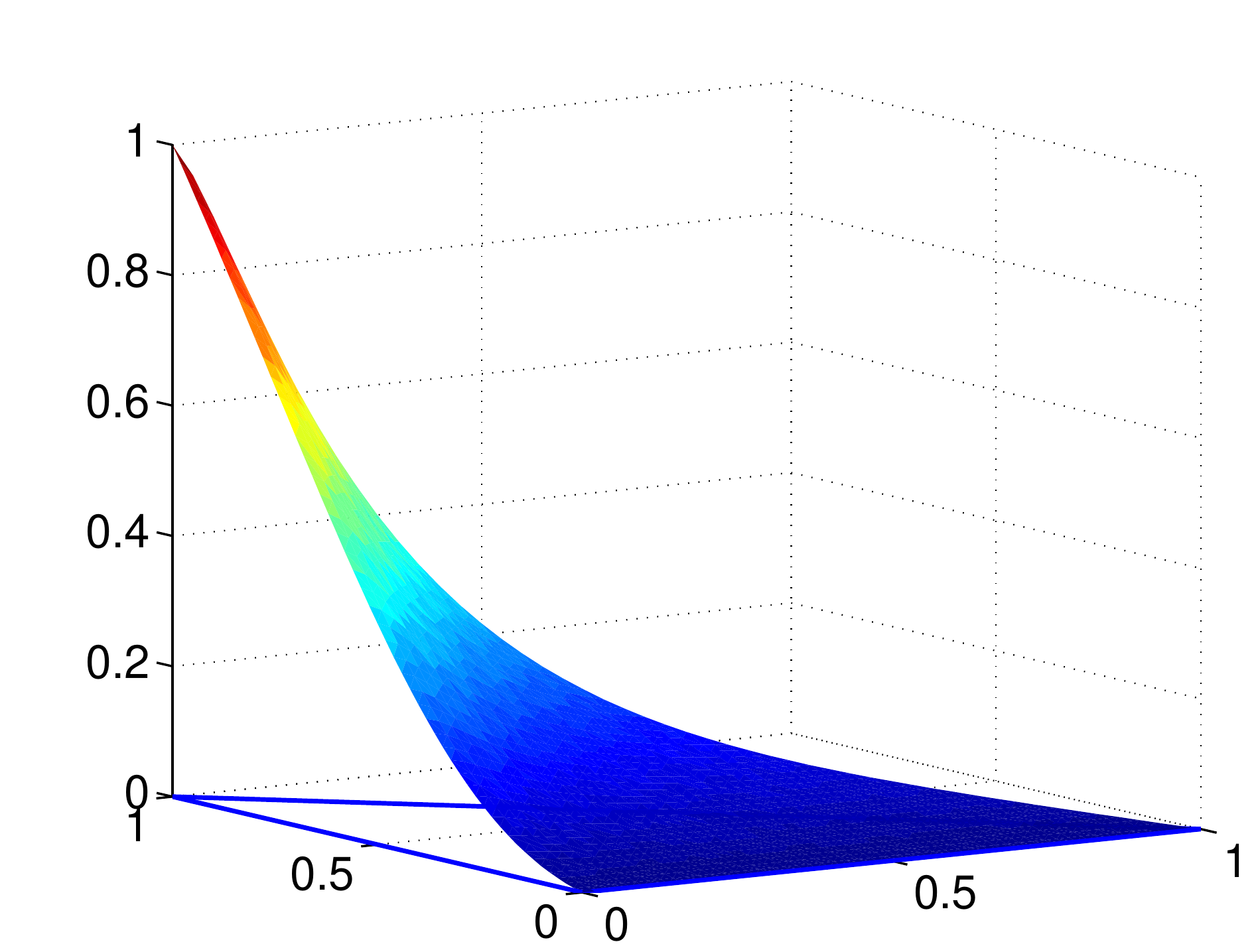}
\caption{Basis functions $u_j$ corresponding to $RT_0^0$ with interpolative constraints enforced at the three vertices. $\phi = 4e^{-2\sqrt{x^2 + y^2}}$.}
\label{fig:basisANS}
\end{center}
\end{figure}

\begin{figure}[!ht]
\begin{center}
\includegraphics[width=5cm]{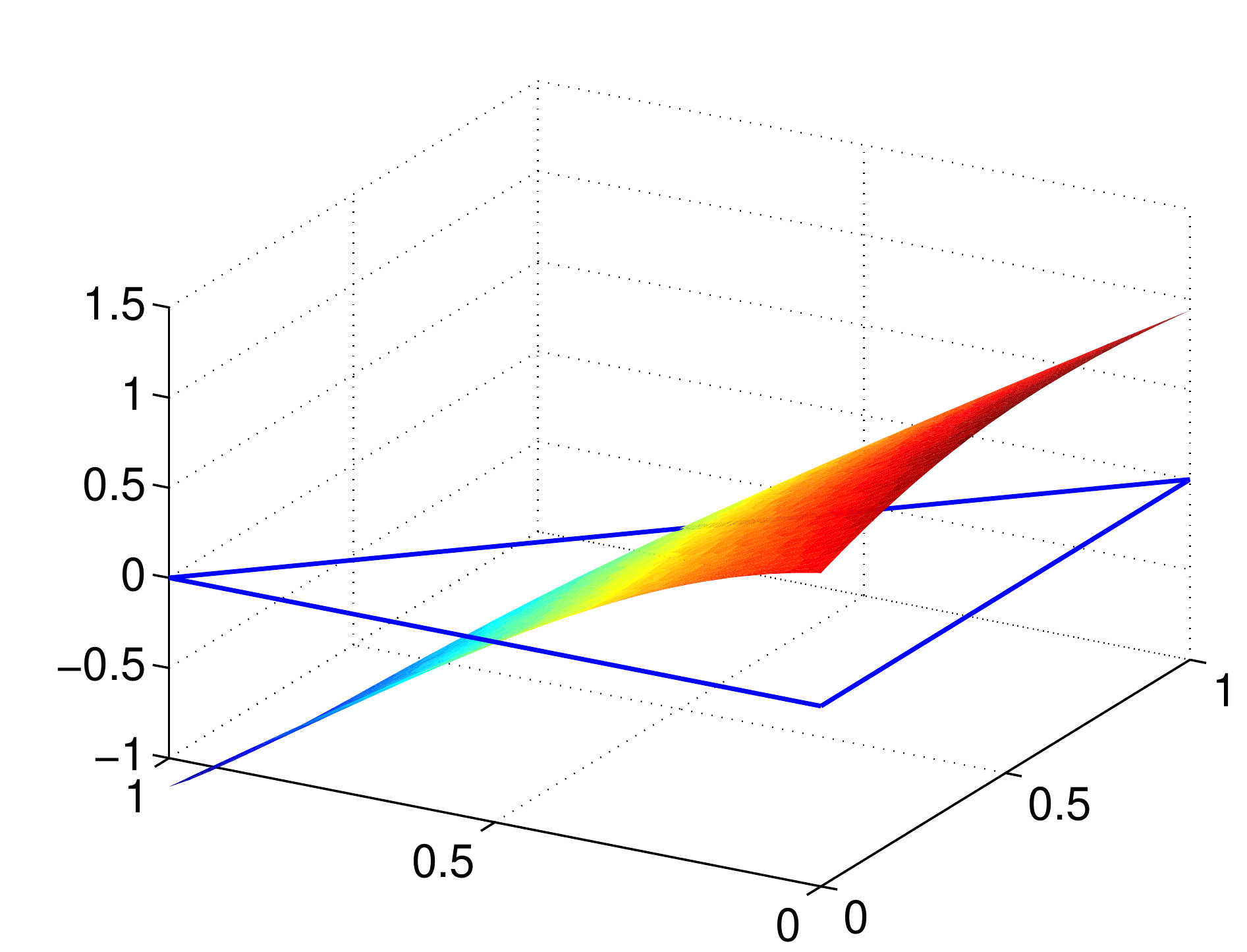}
\includegraphics[width=5cm]{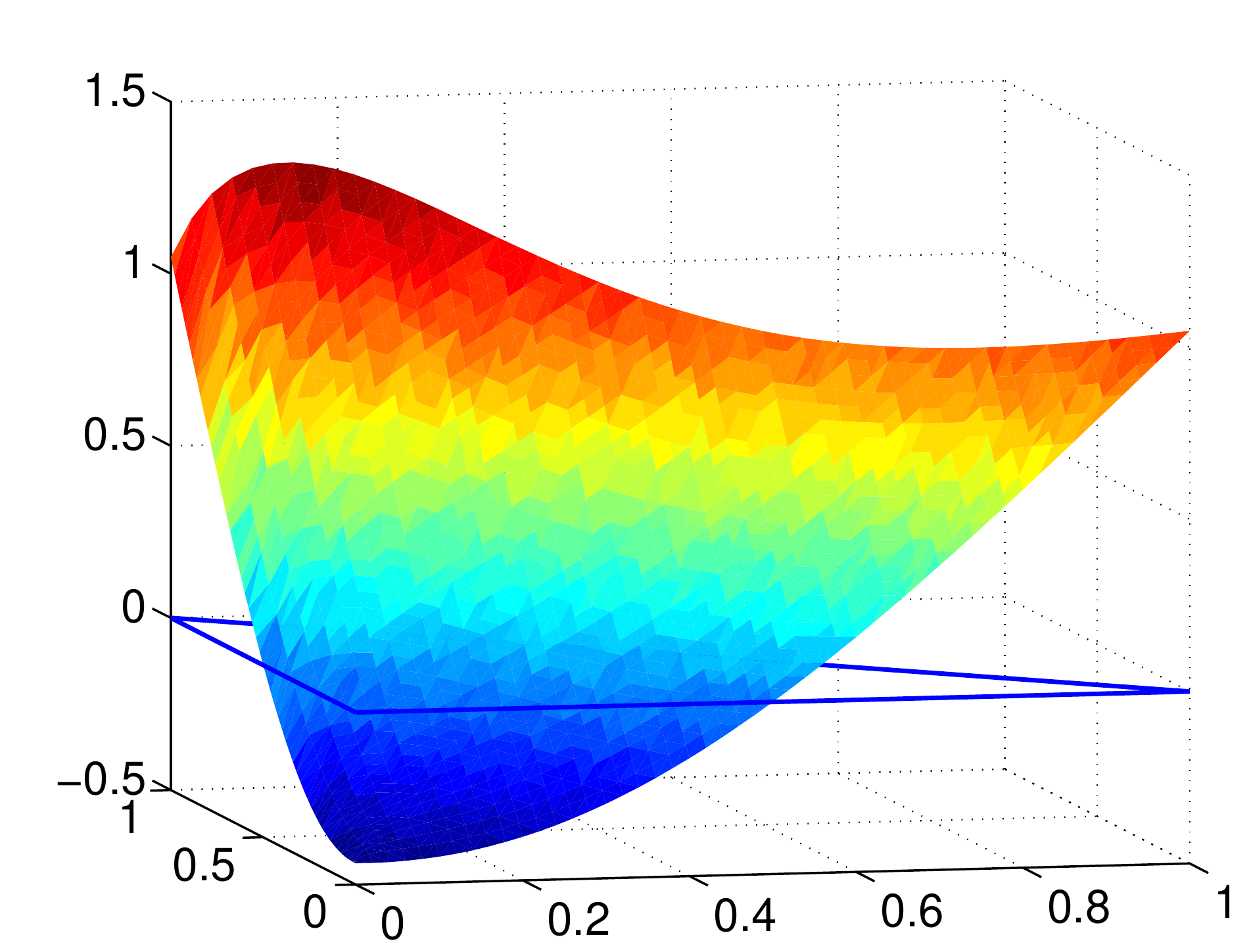}
\includegraphics[width=5cm]{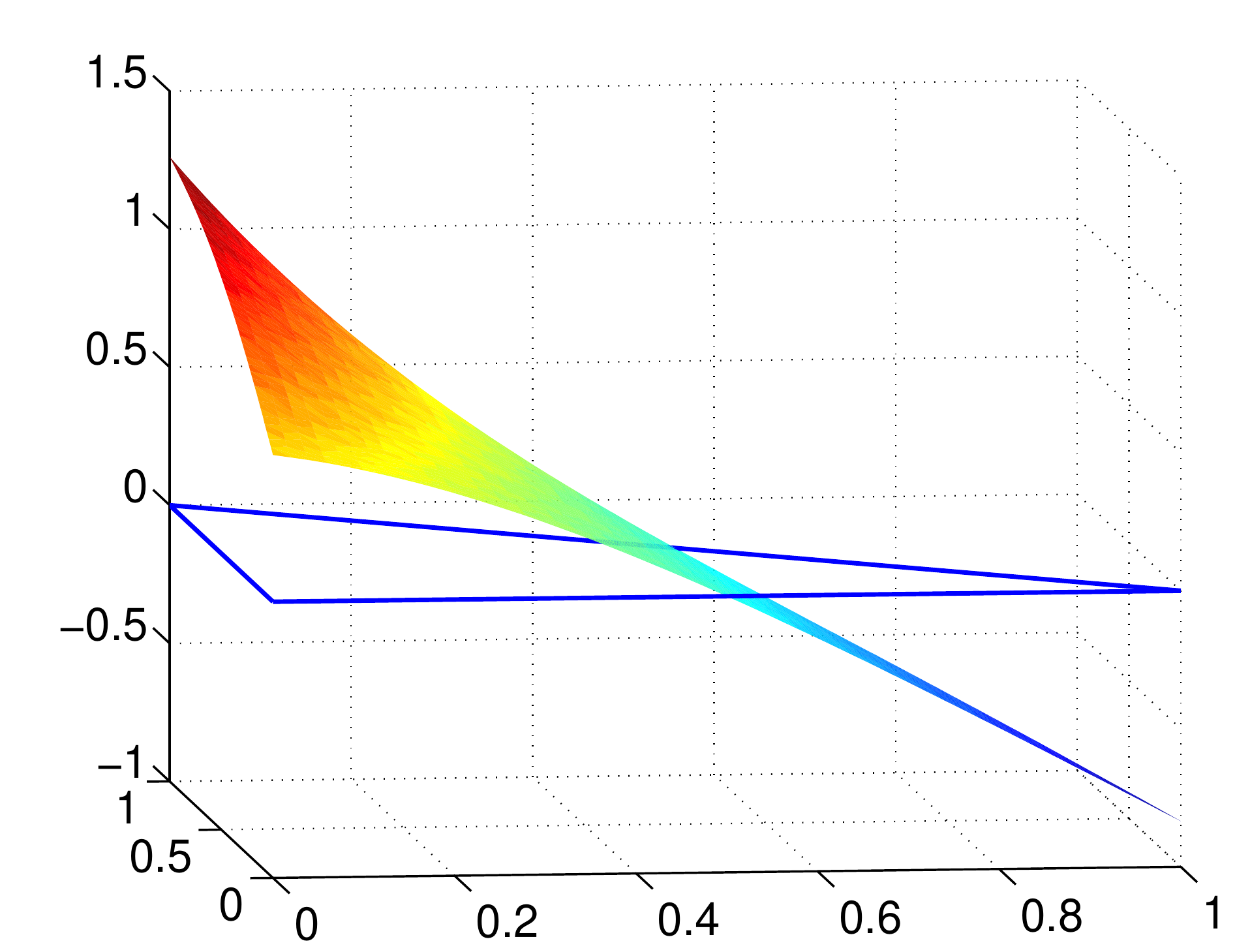}
\caption{Basis functions $u_j$ corresponding to $RT_0^0$ with interpolative constraints enforced at the three edge centers. $\phi = e^{-2\sqrt{x^2 + y^2}}$ }
\label{fig:basisBN}
\end{center}
\end{figure}

\begin{figure}[!ht]
\begin{center}
\includegraphics[width=5cm]{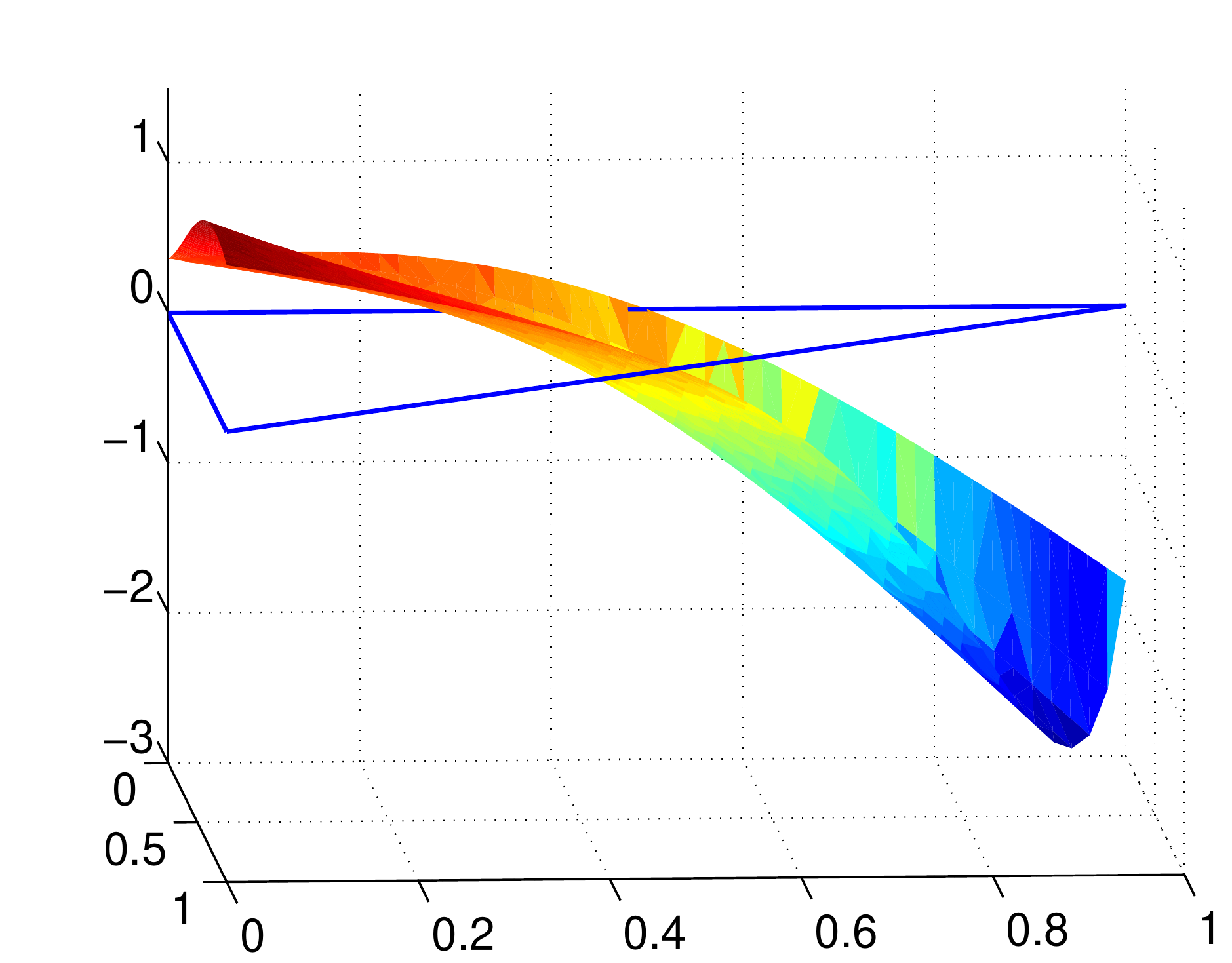}
\includegraphics[width=5cm]{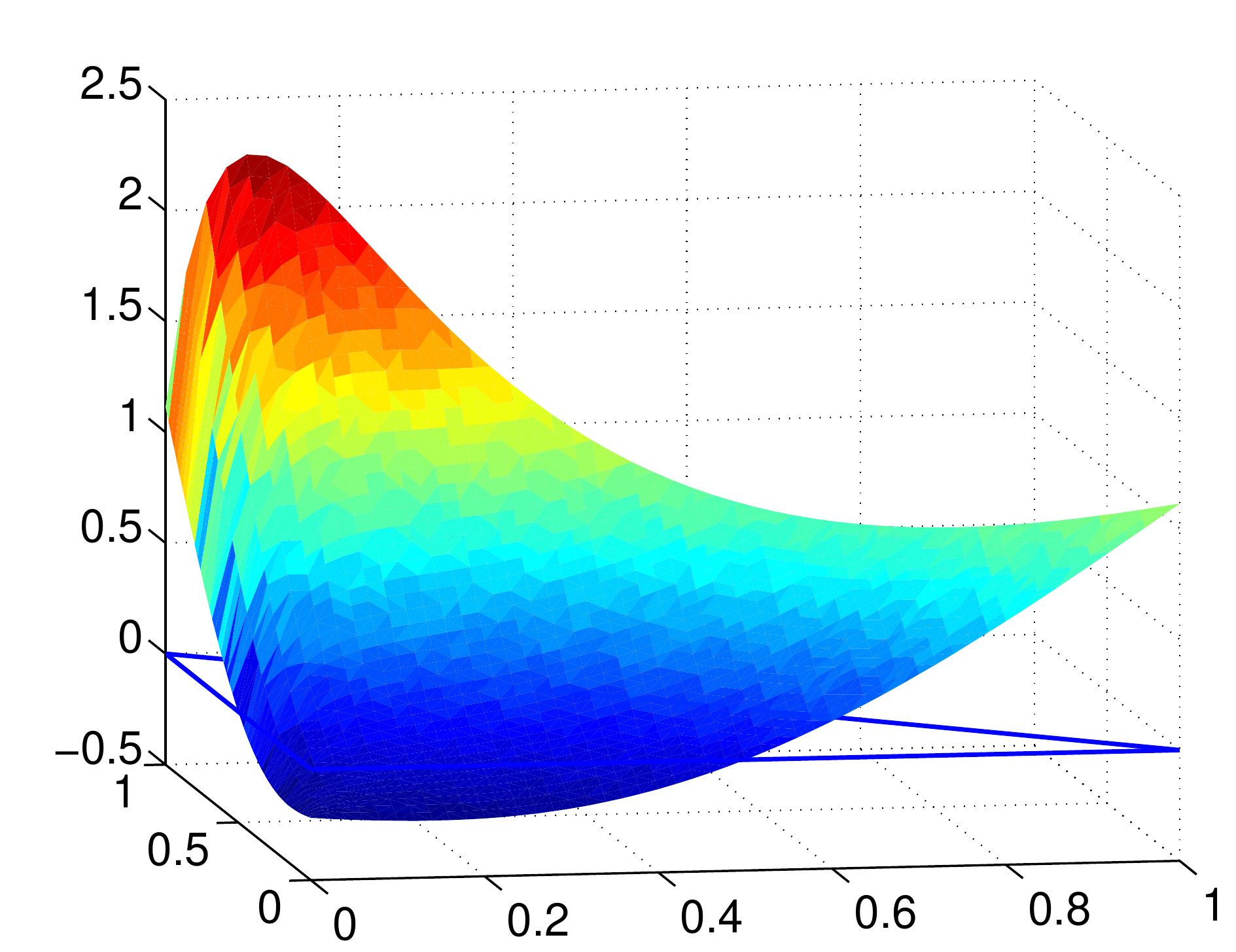}
\includegraphics[width=5cm]{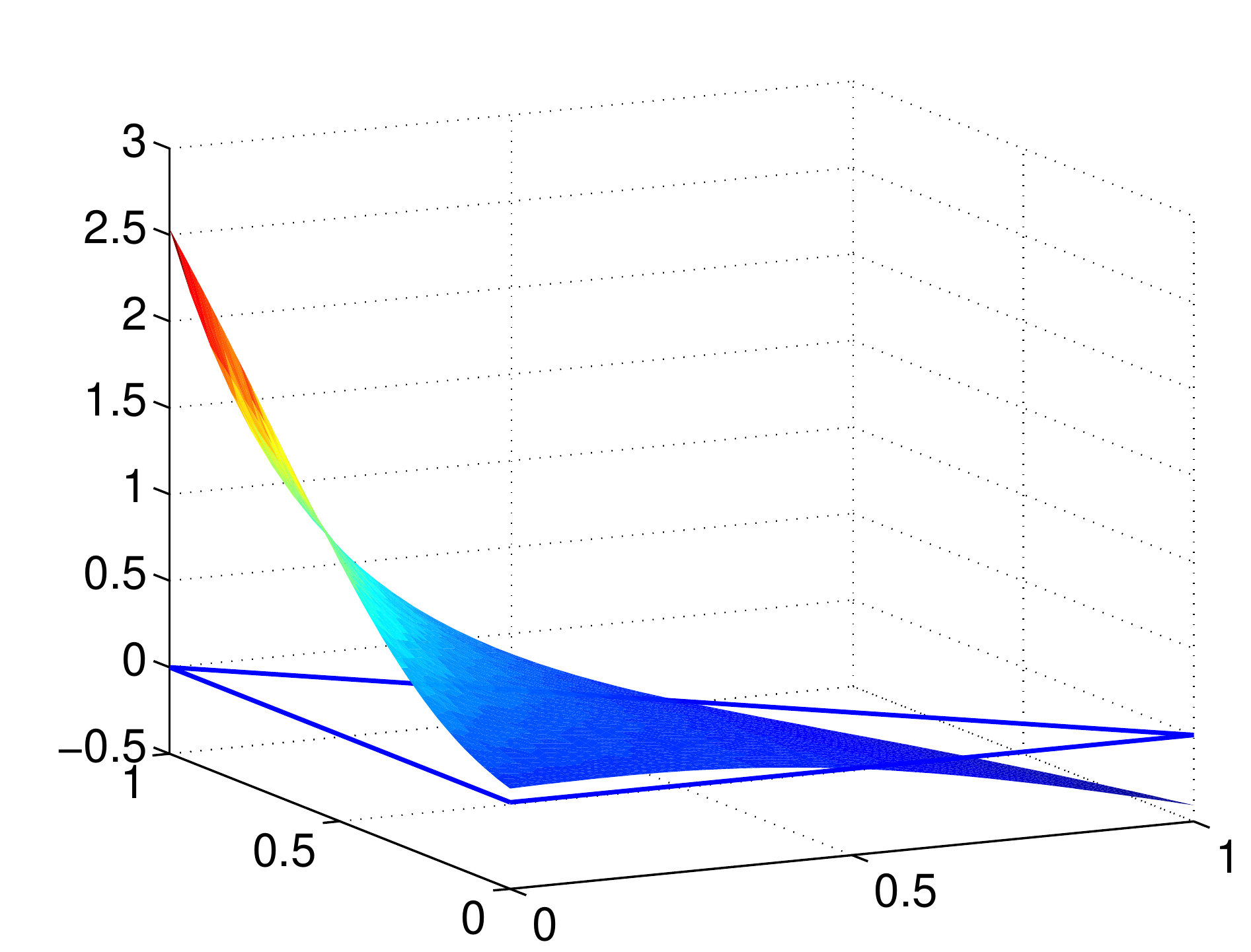}
\caption{Basis functions $u_j$ corresponding to $RT_0^0$ with interpolative constraints enforced at the three edge centers. $\phi = 4e^{-2\sqrt{x^2 + y^2}}$ }
\label{fig:basisBNS}
\end{center}
\end{figure}

\begin{figure}[!ht]
\begin{center}
\includegraphics[width=5cm]{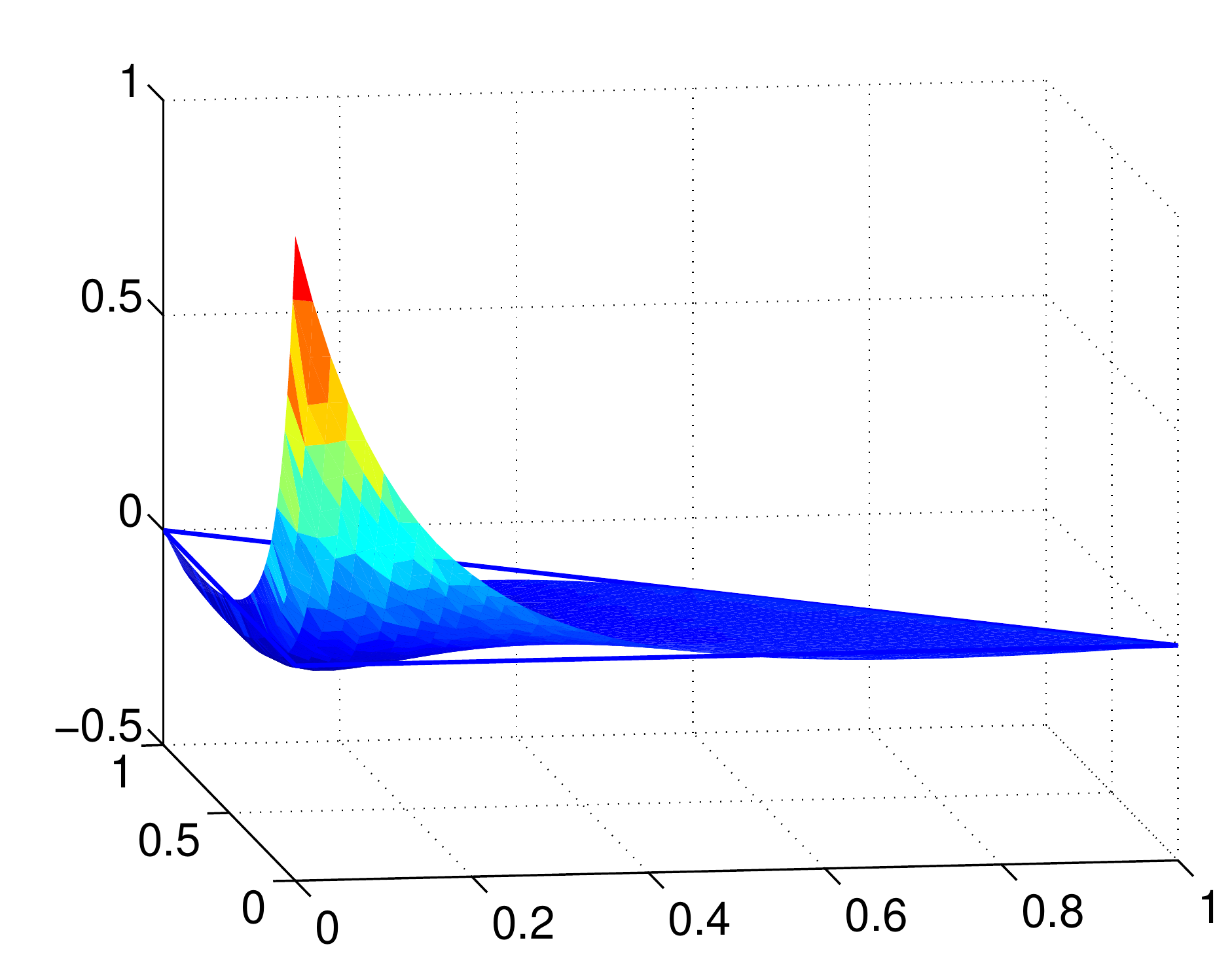}
\includegraphics[width=5cm]{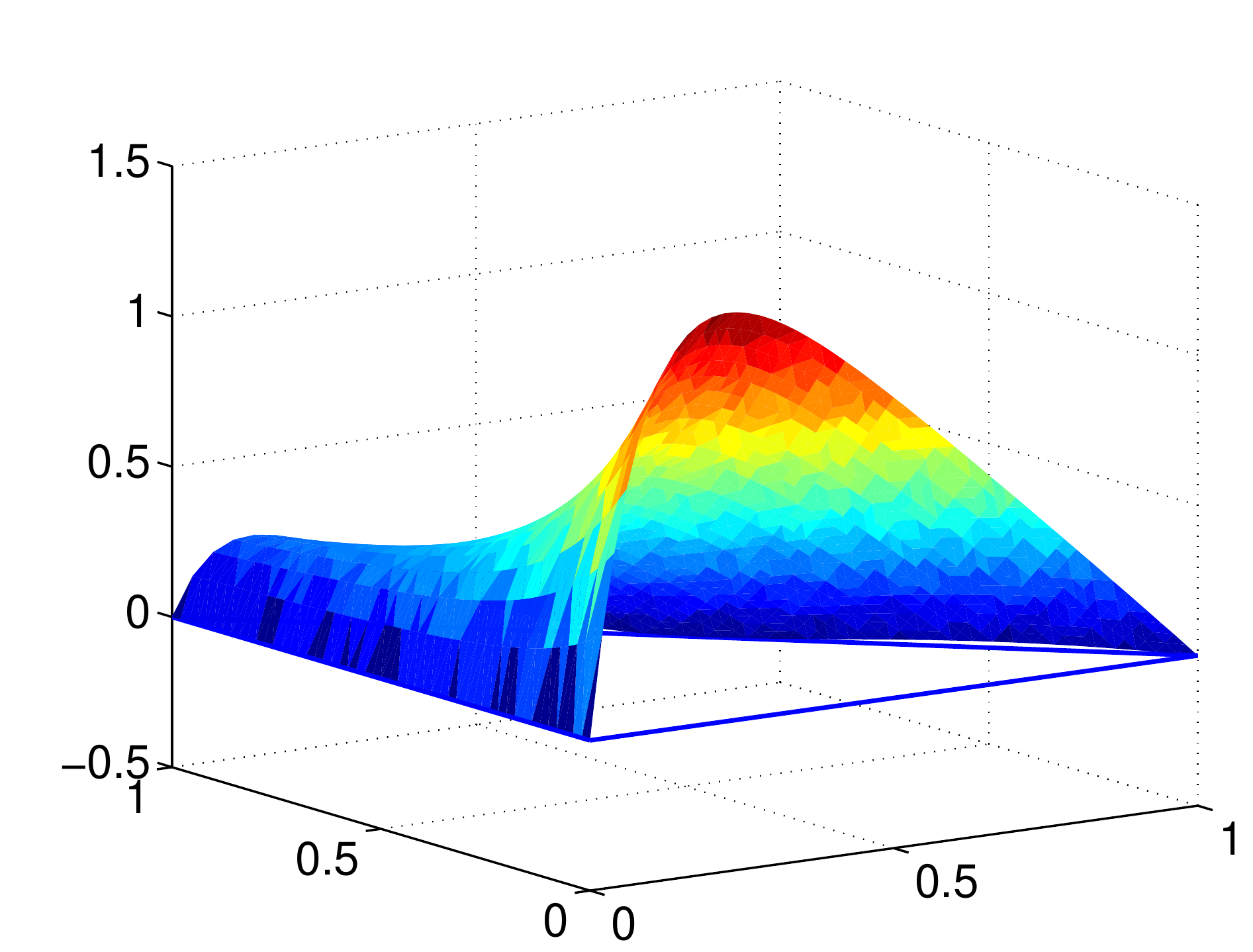}
\includegraphics[width=5cm]{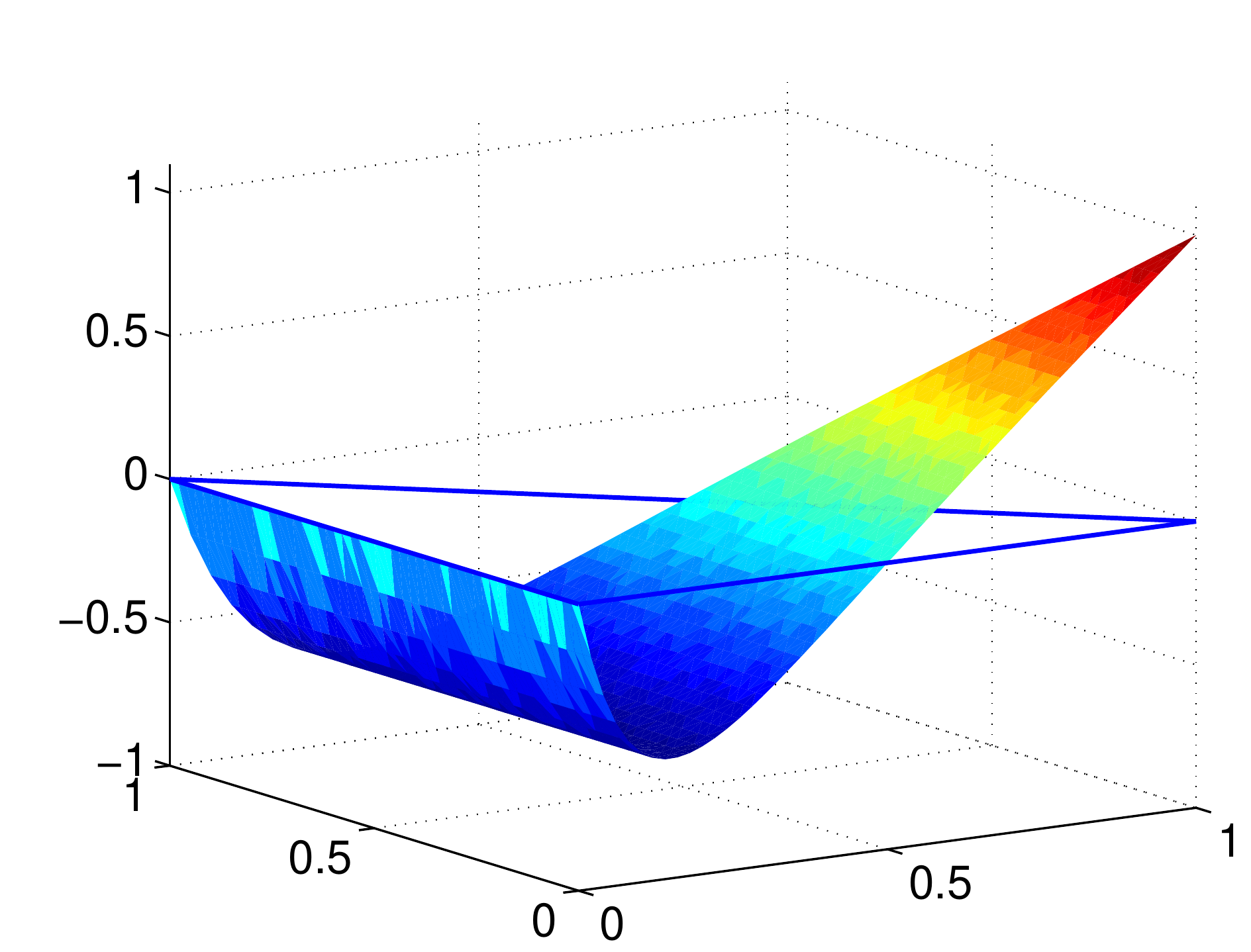}
\includegraphics[width=5cm]{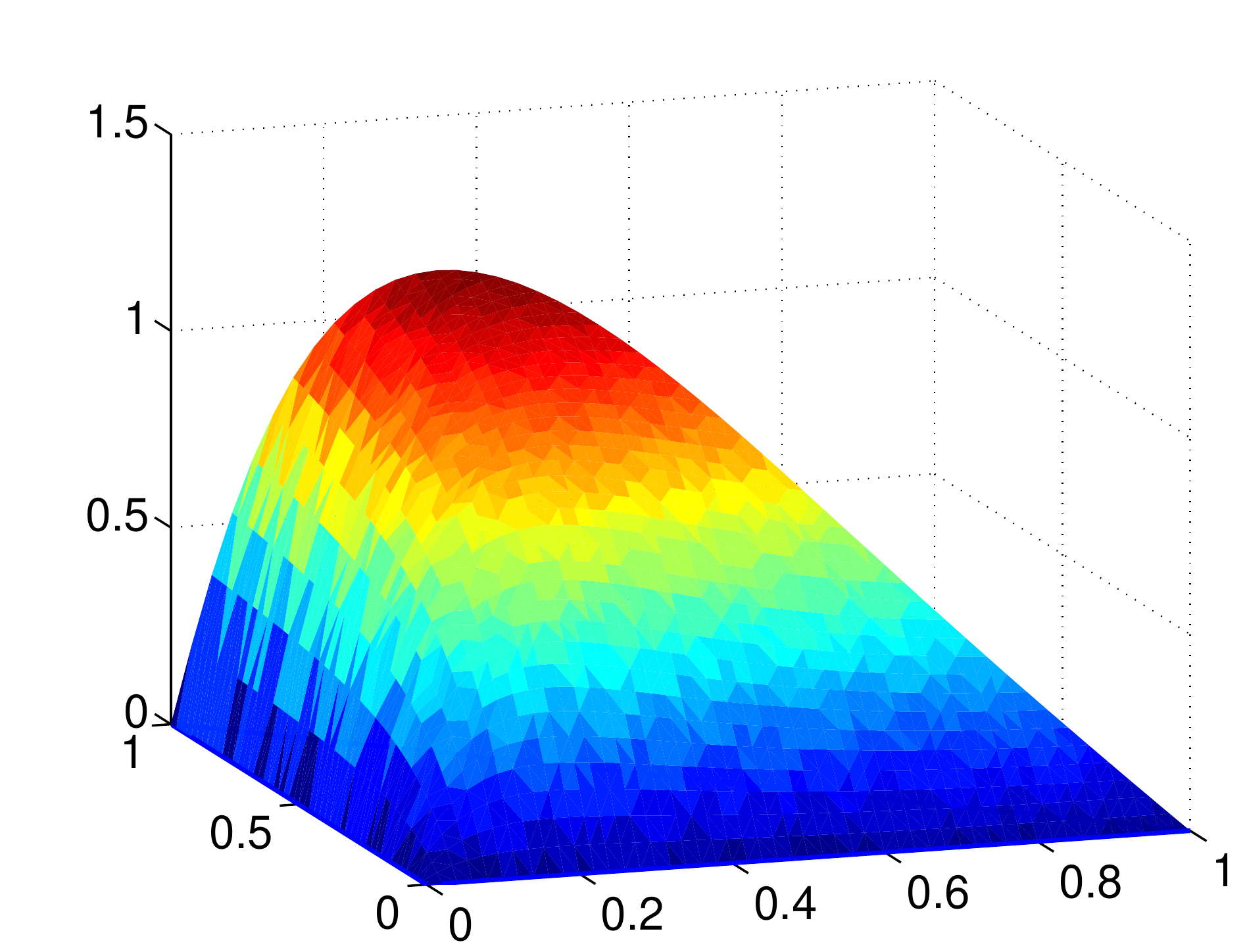}
\includegraphics[width=5cm]{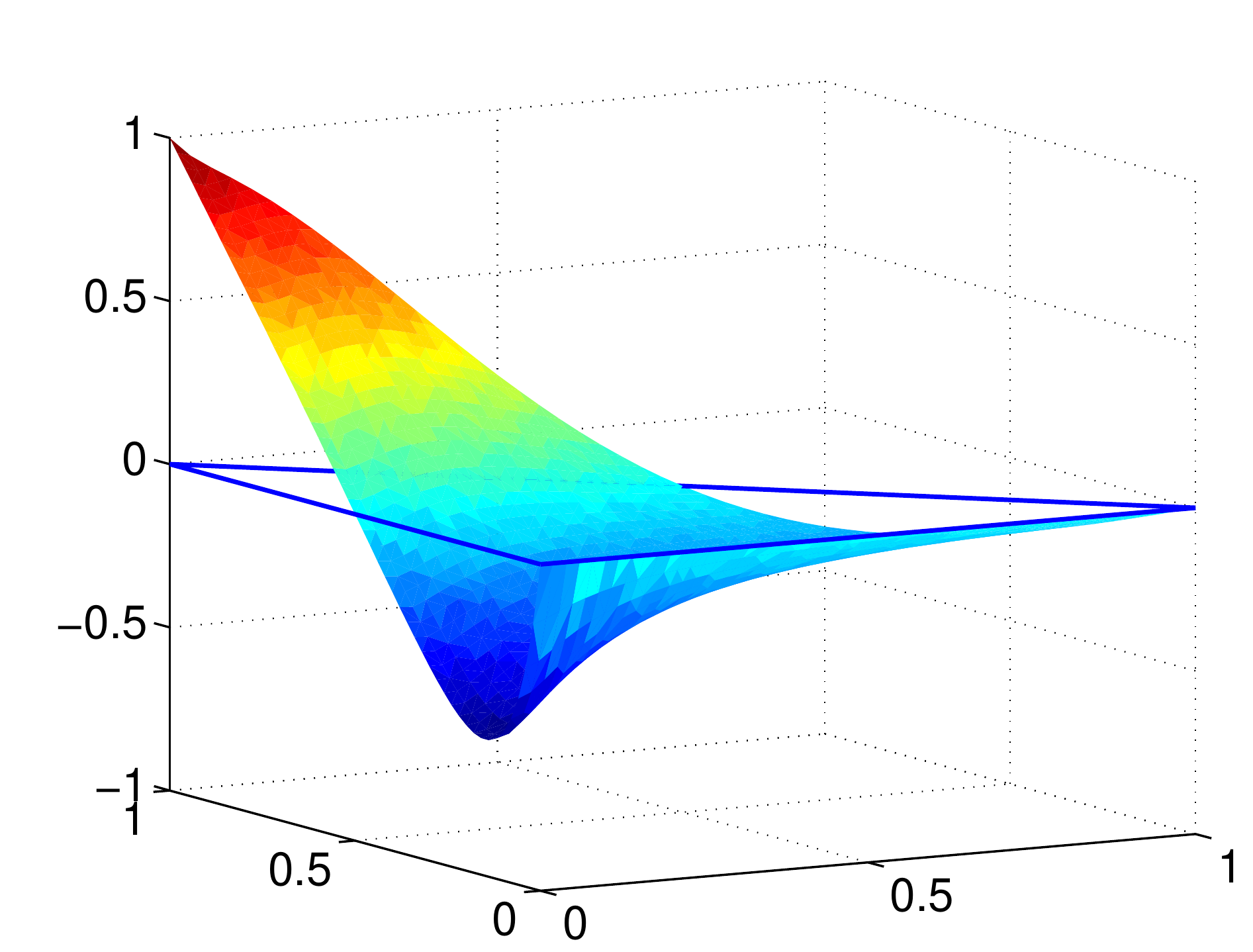}
\includegraphics[width=5cm]{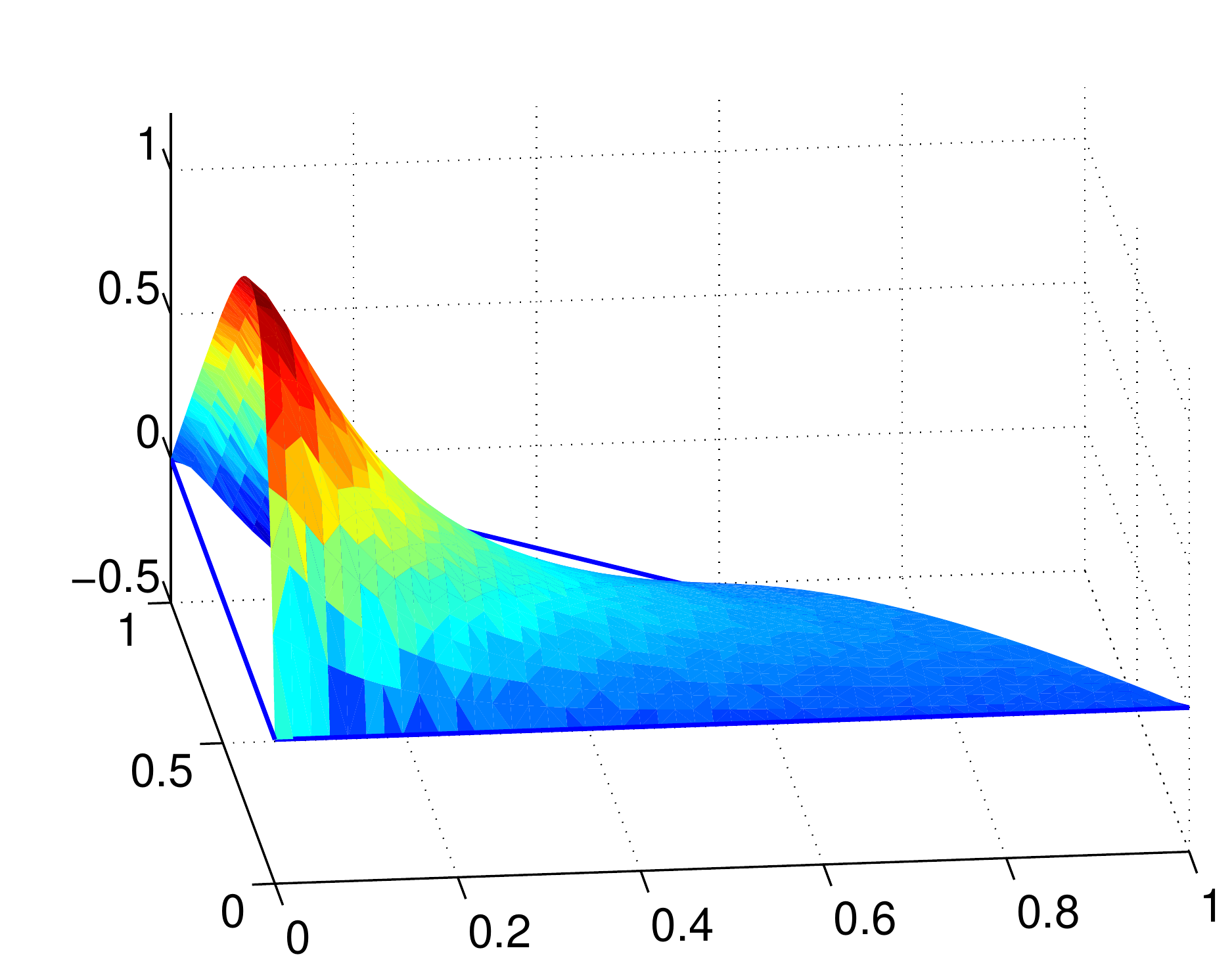}
\caption{Basis functions $u_j$ corresponding to $RT_1^0$ with interpolative constraints enforced at three vertices and three edge centers. $\phi = 4e^{-2\sqrt{x^2 + y^2}}$ }
\label{fig:basisO2}
\end{center}
\end{figure}

Our construction of the 2-D exponentially fitted methods can be illustrated by the left diagram in Figure \ref{fig:expfitidea}. Such construction
does not apply to 3-D problems because the mapping $P_{k+1} \rightarrow \hat{Q}_h$ is not one-to-one. Consequently, even a computable
basis of $\hat{V}_h$ has been found and exponentially fitted, it will be difficult in general to compute the basis for $u_h$ from these fitted
basis functions, in contrast to 2-D cases.
\begin{figure}[!ht]
\begin{center}
\includegraphics[height=2.5cm]{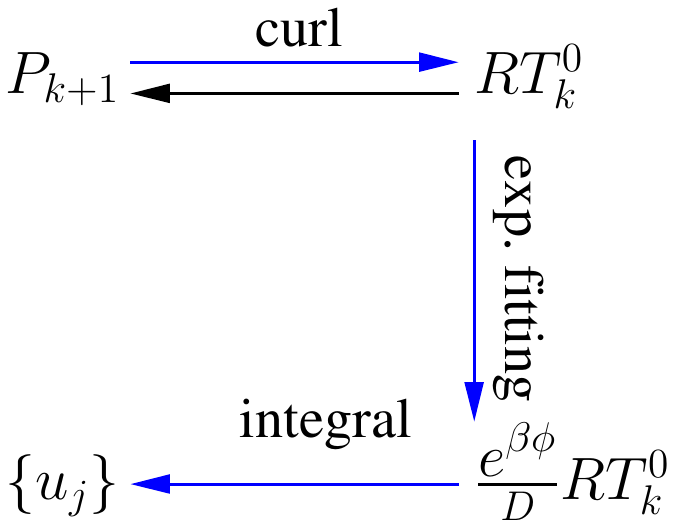} \hspace{2cm}
\includegraphics[height=2.5cm]{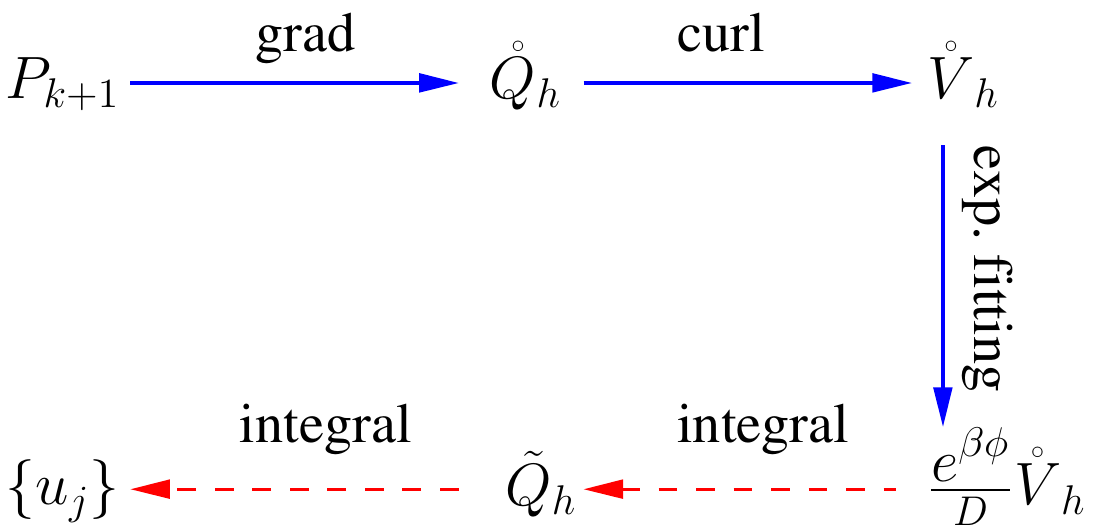}
\caption{Illustration of the construction of 2-D exponentially fitted methods (left). In 3-D (right), the mapping between $P_{k+1}$ and
$\hat{Q}_h$ is not one-to-one so constructing basis for $u_h$ from exponentially fitted $\mathring{V}_h$ for arbitrary $k$ is difficult.
}
\label{fig:expfitidea}
\end{center}
\end{figure}
It is possible however to follow the procedure indicated by (\ref{eqn:rhoexpansion_2}) to construct $\mathcal{O}(h)$ order
3-D exponentially fitted method, based on the fact that there is a simple basis of $D_h|_K=RT_0^0|_K$ in 3-D, that
is $\vecv_i = \vece_i$ on a reference tetrahedron. Hence we can use the expansion
$$ J_h(u_h)|_K = \sum_{i=1}^3 c_i \vecv_i $$
and following the procedure identical to that in subsection \ref{subsect:attempts} to finally get the following 
basis functions for $u_h|_{\hat{K}}$:
\begin{align}
u_j(x,y,z) = u_j(x_0,y_0,z_0) + &
\frac{e^{-\beta \phi}}{D} \sum_{i=1}^{3} m_{j,i}  \left (  \int_{x_0}^x e^{\beta \phi(s,y_0,z_0)} \vecv_i^x(s,y_0,z_0) ds
+ \int_{y_0}^y e^{\beta \phi(x,t,z_0)} \vecv_i^y(x,t,z_0) dt + \right.  \nonumber \\
 &  \qquad \left.  \int_{z_0}^z e^{\beta \phi(x,y,r)} \vecv_i^z(x,y,r) dr \right), \label{eqn:ucomputing_3D}
\end{align}
where $\vecv_i = (\vecv_i^x, \vecv_i^y, \vecv_i^z)^T$. The total four parameters, i.e., $u_j(x_0,y_0,z_0)$ and the three
coefficients $m_{j,i}$, will be determined by enforcing the interpolative constraints at four nodes, which can be the
four nodes of the tetrahedral or the centers of its four faces. A linear system similar to Eq.(\ref{eqn:linearsystem_m})
can be formed and its solvability can be established through a direct computation. The space $\mathrm{span}\{ u_j \}$ is
nonconforming similar to the 2-D cases.

%Extension of the above method for 3-D case completely depends whether one can construct a computable basis for $D_h$.
%It seems difficult to find such a basis because in 3-D the vector curl operator that maps between 3-D vectors have a large
%kernel containing all gradient vectors, i.e., $ \mathrm{curl} \nabla g(x,y,z) = 0$ for any scalar function $g(x,y,z)$,
%a property that we used above. Readers can refer to \cite{ScheichlR2002,CaiZ2003a} for some of the recent constructions.
%While these results are complicate to implement,

\section{Convergence of Exponentially Fitted Methods based on $RT_0^0$} \label{sec:converg}

In this section we shall give a convergence analysis of the Galerkin finite element approximation of the problem (\ref{eqn:problem})
using the nonconforming exponentially fitted basis functions (\ref{eqn:ucomputing}) or (\ref{eqn:ucomputing_3}). We consider the
symmetrized form of the drift-diffusion equation, and adopt the approach in \cite{SaccoR1999a} to establish the $\mathcal{O}(h)$
convergence of the exponentially fitted method constructed on $D_h=RT_0^0$. Convergence of the high order methods and the 3-D method
involve more complicated error estimate of the interpolation in nonconforming finite element spaces and will be reported separately. 
Notice that the interpolative basis functions of the nonconforming finite element spaces for the Slotboom variable $\rho_h$ can be obtained 
from $u_j(x,y)$ through scaling. Let $V = H^1_0(\Omega)$ the weak solution
of the problem (\ref{eqn:problem}) can be uniquely solved from the following weak formulation
\begin{equation} \label{eqn:variational_1}
a(\rho,v) = (f,v) \quad \forall ~v \in V,
\end{equation}
where the coercive, symmetric bilinear form $a(\rho,v): V \times V \rightarrow \mathbb{R}$ is given
$$ a(\rho,v) = \int_{\Omega} J(\rho) \cdot \nabla v dx = \int_{\Omega} D e^{-\beta \phi} \nabla \rho \cdot \nabla v dx.$$
For the purpose of Galerkin finite element approximation of this weak formulation we define the finite element
space
$$ V_h = \{ w \in L^2(\Omega): w \in \mathrm{span} \{ \rho_j \} ~\forall ~ K \in \mathcal{T}; w = 0 ~ \mbox{on}~ \partial \Omega \}.$$
By construction $\rho_j$ is continuous only at selected nodes but in general discontinuous on edges, hence $V_h$ is not a subspace of
$V$ in general. Following the standard treatments for nonconforming finite element methods (E.g., Chapter 10 on \cite{BrennerScottFEMv3})
we introduce the space $V + V_h$ in which we can define discrete bilinear and linear form, as
\begin{align}
a_h(\rho_h, v_h) & = \sum_{K \in \mathcal{T}_h}
\int_{K} D e^{-\beta \phi} \nabla \rho_h \cdot \nabla v_h, \quad \forall ~\rho_h, v_h \in V + V_h, \label{eqn:new_bilinear} \\
(f,v_h)_h & = \sum_{K \in \mathcal{T}_h} \int_{K} f v_h, \quad \forall  v_h \in V_h. \label{eqn:new_linear}
\end{align}
We equip the space $V + V_h$ with a norm
\begin{equation} \label{eqn:energynorm}
\| v \|_h = \left (  \sum_{K \in \mathcal{T}_h}  | v |_{1,K}^2  \right)^{1/2}.
\end{equation}
It can be seen that this norm becomes the semi-norm $|v|_{1,\Omega}$ for $v \in V$, which is itself a norm due to the Poinc\'{a}re
inequality. To check that $\| v\|_h =0$ if and only if $v=0$ for $v \in V_h$ we notice that $v$ must be a constant for
$|v|_{1, K}= 0$. Since $v \in V_h$ is continuous at nodes by construction the constant must be the same for all elements,
and hence shall be zero for $v = 0$ on $\partial \Omega$. It follows immediately that $a_h(\rho_h,v_h)$ is continuous and
coercive on $V+V_h$ because
$$ |a_h(v_h, w_h) | = \sum_{K \in \mathcal{T}_h} \int_{K} D e^{-\beta \phi} \nabla v_h \cdot \nabla w_h \le
M   \| v_h \|_h \| w_h \|_h, \quad \forall ~ v_h, w_h \in V+V_h, $$
and
$$ a_h(v_h, v_h ) =  \sum_{K \in \mathcal{T}_h} \int_{K} D e^{-\beta \phi} \nabla \rho_h \cdot \nabla v_h \ge  m \|v_h\|_h^2, \quad \forall  ~ v_h \in V+V_h,$$
where
$$ M = e^{-\beta \min_{\vecx \in \Omega} | \phi(\vecx)|}, \quad m = e^{-\beta \max_{\vecx \in \Omega}{|\phi(\vecx)|}}. $$
The nonconforming Galerkin finite element approximation of problem (\ref{eqn:driftdiffusioneq_sym}) finally
reads: \textit{Find $\rho_h \in V_h$ such that}
\begin{equation} \label{eqn:weak_final}
a_h(\rho_h, v_h ) = (f,v_h)_h, \quad \forall ~ v_h \in V_h.
\end{equation}
There are two major components in the convergence analysis of the nonconforming exponentially fitted finite element method, one is the
interpolation error in $V_h$, and the other is the consistency error arising from the nonconformity of the method, according to the
second Strang lemma. The analysis in \cite{SaccoR1999a} follows this standard approach. Estimation of the interpolation error is obtained
through a decomposition
\begin{equation} \label{eqn:interpolation_decomposition}
\norm{\rho - \Pi_h \rho} \le \norm{\rho - \Pi_1 \rho}_h + \norm{\Pi_h \rho - \Pi_1 \rho}_h,
\end{equation}
where $\Pi_h \rho$ is the interpolation of $\rho$ in $V_h$, and $\Pi_1 \rho$ is the $P_1$ \textit{conforming interpolation} of $\rho$ in
$\mathcal{T}_h$. This decomposition is applicable to our constructions with interpolative constraints enforced at the same set of
nodes as those for conforming Lagrange finite element spaces. If the edge nodes are chosen for the constraint enforcement we will
have to adopt a decomposition alternative to (\ref{eqn:interpolation_decomposition})
\begin{equation} \label{eqn:interpolation_decomposition_nonconforming}
\norm{\rho - \Pi_h \rho} \le \norm{\rho - \tilde{\Pi}_1 \rho}_h + \norm{\Pi_h \rho - \tilde{\Pi}_1 \rho}_h,
\end{equation}
here $\tilde{\Pi}_1 \rho$ is the $P_1$ \textit{nonconforming} interpolation of $\rho$ in $\mathcal{T}_h$. The first terms in
these two cases have a similar estimate (\cite{HandbookNA_FEM_I}, P. 130 and \cite{CrouzeixRaviart})
\begin{equation} \label{eqn:interpolation_error}
\norm{\rho - \Pi_1 \rho}_h \le C_1 h |\rho|_{2, \Omega}, \quad \norm{\rho - \tilde{\Pi}_1 \rho}_h \le C_2 h |\rho|_{2, \Omega}
\end{equation}
for different generic constants $C_1,C_2$. Consequently for both cases the following result concerning the convergence of
the solution $\rho_h$ of Eq.(\ref{eqn:weak_final}) can be established.
\begin{theorem}
Let $\rho$ be the unique weak solution of Problem (\ref{eqn:driftdiffusioneq_sym}) in $H^2(\Omega) \cap H^1_0(\Omega)$ and let
$\{ \mathcal{T}_h \}$ be a family of regular triangulations of $\bar{\Omega}$. Then there exist positive constants $C_1(\phi),
C_2(\phi)$ and $C_3(\phi)$ that depend only on $\phi$, such that
\begin{equation} \label{eqn:main_estimate}
\| \rho - \rho_h \|_h \le C_1(\phi) h | \rho|_{2, \Omega} + C_2(\phi) \| \nabla \phi \|_{\infty, \Omega}
| \Pi_1 \rho |_{1, \Omega} + C_3(\phi) h \| \nabla \phi \|_{\infty, \mathcal{T}_h} \| \rho \|_{2, \Omega}.
\end{equation}
\end{theorem}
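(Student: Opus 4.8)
The plan is to invoke the second Strang lemma for nonconforming methods, which bounds the error $\|\rho - \rho_h\|_h$ by the sum of a best-approximation (interpolation) term and a consistency term measuring the failure of the discrete bilinear form to reproduce the variational equation on $V_h$. Since the excerpt has already established that $a_h(\cdot,\cdot)$ is coercive and continuous on $V+V_h$ with constants $m$ and $M$, the abstract estimate reads
\begin{equation*}
\|\rho - \rho_h\|_h \le \left(1 + \frac{M}{m}\right) \inf_{v_h \in V_h} \|\rho - v_h\|_h
+ \frac{1}{m} \sup_{w_h \in V_h} \frac{|a_h(\rho, w_h) - (f,w_h)_h|}{\|w_h\|_h}.
\end{equation*}
The three terms on the right-hand side of (\ref{eqn:main_estimate}) must then be produced: the first from the interpolation estimate, and the second and third from splitting the consistency functional.

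First I would control the interpolation term. Choosing $v_h = \Pi_h \rho$ and applying the decomposition (\ref{eqn:interpolation_decomposition}) (or (\ref{eqn:interpolation_decomposition_nonconforming}) for the edge-node case), the first piece is bounded by $C_1 h |\rho|_{2,\Omega}$ via the standard estimate (\ref{eqn:interpolation_error}). The remaining piece $\|\Pi_h \rho - \Pi_1 \rho\|_h$ compares the exponentially fitted interpolant against the ordinary $P_1$ Lagrange interpolant at the same nodes; since both interpolate $\rho$ at the vertices, their difference should be estimated elementwise by exploiting that the exponentially fitted basis $u_j$ reduces to the $P_1$ basis as $\nabla\phi \to 0$, so the gap is $\mathcal{O}(\|\nabla\phi\|_\infty)$ on each element. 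This is the source of the second term $C_2(\phi)\|\nabla\phi\|_{\infty,\Omega}|\Pi_1\rho|_{1,\Omega}$, and I would obtain it by Taylor-expanding the fitted basis functions (using the representation in Eq.(\ref{eqn:Uexpansion})) about the $\phi=0$ limit.

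For the consistency term I would integrate by parts elementwise. Writing $a_h(\rho,w_h) = \sum_K \int_K De^{-\beta\phi}\nabla\rho\cdot\nabla w_h$ and using $-\nabla\cdot(De^{-\beta\phi}\nabla\rho)=f$ strongly, each element integral yields a volume term that cancels against $(f,w_h)_h$ plus a boundary term $\int_{\partial K} De^{-\beta\phi}(\nabla\rho\cdot\vecn)\,w_h$. Summing over elements, the interior-edge contributions collect into jump terms $\sum_e \int_e De^{-\beta\phi}(\nabla\rho\cdot\vecn)[w_h]$ in the jump $[w_h]$ of the nonconforming test function. The standard nonconforming machinery then bounds these jump integrals: because $w_h$ is continuous at the interpolation nodes on each edge, a trace/approximation argument (analogous to the Crouzeix–Raviart analysis in \cite{CrouzeixRaviart}) gives the factor $h\|\rho\|_{2,\Omega}$, while the weight $De^{-\beta\phi}$ and the structure of the exponentially fitted current contribute the $\|\nabla\phi\|_{\infty,\mathcal{T}_h}$ prefactor, yielding the third term $C_3(\phi)h\|\nabla\phi\|_{\infty,\mathcal{T}_h}\|\rho\|_{2,\Omega}$.

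The main obstacle will be the second term, namely quantifying how far the exponentially fitted interpolant $\Pi_h\rho$ deviates from the conforming $P_1$ interpolant $\Pi_1\rho$ in terms of $\|\nabla\phi\|_\infty$. Unlike the classical nonconforming estimate, here the basis functions themselves depend on $\phi$ through path integrals of $e^{\beta\phi}$, so the comparison requires a careful expansion of the fitted basis about the zero-field limit and a uniform control of the remainder over the family of regular triangulations. The path-dependence of the construction and the fact that $\Pi_h\rho$ is defined implicitly through the linear system (\ref{eqn:F_standard}) make this estimate the technical heart of the proof; once the dependence on $\|\nabla\phi\|_\infty$ is isolated, the remaining steps follow the standard Strang-lemma template adapted in \cite{SaccoR1999a}.
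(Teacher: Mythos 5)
Your proposal takes essentially the same route as the paper: the paper's own proof is simply a citation to \cite{SaccoR1999a}, and the approach it outlines before the theorem---the second Strang lemma, the interpolation decomposition (\ref{eqn:interpolation_decomposition}) (or (\ref{eqn:interpolation_decomposition_nonconforming})) with the $P_1$ estimate (\ref{eqn:interpolation_error}), and a separate consistency bound for the nonconformity---is exactly your template, with the three terms of (\ref{eqn:main_estimate}) attributed to the same sources you identify (interpolation, the gap $\|\Pi_h\rho-\Pi_1\rho\|_h$ controlled by $\|\nabla\phi\|_\infty$, and the edge-jump consistency error). Your sketch is consistent with that cited argument, so there is no substantive difference to report.
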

\begin{proof}
In \cite{SaccoR1999a}.
\end{proof}

\section{Summary and Future Work} \label{sec:summary}
We proposed a general approach to construct exponentially fitted methods for solving 2-D drift-diffusion equations.
Our constructions are based on the divergence-free subspace of $\hdiv$, usually $RT_k^0$. The basis functions $u_j$ for the
density function are computed from the basis of $RT_k^0$, and through enforcement of interpolative constraints at selected
node sets, giving rise to nonconforming finite element spaces. We showed that our approach can reproduce some of the
previous constructions of the exponentially fitted methods, and will recover the standard conforming or nonconforming
finite element spaces when the electric potential is zero. Thanks to the one-one mapping between $RT_k^0$ and $P_{k+1}$,
our approach can be used to develop arbitrary high-order 2-D exponentially fitted methods from $RT_k^0$. Extension of our
approach for high-order 3-D methods is difficult because there does not exist a similar one-one correspondence between
$k^{th}$ order divergence-free vector space and $P_{k+1}$, but construction of first-order 3-D exponentially fitted method
is possible.

General convergence theory for the high-order methods can be established following decomposition (\ref{eqn:interpolation_decomposition}),
the estimate of $P_k$ conforming interpolation errors, the estimate of $\Pi_h \rho - \Pi_k \rho$, and the consistency error of high-order nonconforming
finite element methods. These results will be reported in a forthcoming paper. We will also conduct extensive numerical experiments of the methods,
in particular the high order methods, and apply them to realistic biomolecular drift-diffusion problems.

%\bibliographystyle{plain}
%\bibliography{../../../bib/entire}

\begin{thebibliography}{10}

\bibitem{AllenD1955}
D.~N. Allen and R.~V. Southwell.
\newblock Relaxation method applied to determine the motion, in two dimensions,
  of viscous fluid past a fixed cylinder.
\newblock {\em Q.\ J.\ Mech.\ Appl.\ Math.}, 8(2):129--145, 1955.

\bibitem{AmatoreC2009b}
C.~Amatore, O.~V. Klymenko, A.~I. Oleinick, and I.~Svir.
\newblock Diffusion with moving boundary on spherical surfaces.
\newblock {\em Chemphyschem.}, 10:1593--1602, 2009.

\bibitem{AmatoreC2009a}
C.~Amatore, A.~I. Oleinick, O.~V. Klymenko, and I.~Svir.
\newblock Theory of long-range diffusion of proteins on a spherical biological
  membrane: Application to protein cluster formation and actin-comet tail
  growth.
\newblock {\em Chemphyschem.}, 10:1586--1592, 2009.

\bibitem{AngermannL2003b}
Lutz Angermann and Song Wang.
\newblock On convergence of the exponentially fitted finite volume method with
  an anisotropic mesh refinement for a singularly perturbed
  convection-diffusion equation.
\newblock {\em Computational Methods in Applied Mathematics}, 3:493--512, 2003.

\bibitem{AngermannL2003a}
Lutz Angermann and Song Wang.
\newblock Three-dimensional exponentially fitted conforming tetrahedral finite
  elements for the semiconductor continuity equations.
\newblock {\em Applied Numerical Mathematics}, 46(1):19 -- 43, 2003.

\bibitem{DouglasA2000a}
Douglas~N. Arnold, Richard~S. Falk, and Ragnar Winther.
\newblock Multigrid in h(div) and h(curl).
\newblock {\em Numer.\ Math.}, 85:197--218, 2000.

\bibitem{BrennerScottFEMv3}
S.~C. Brenner and L.~R. Scott.
\newblock {\em The Mathematical Theory of Finite Element Methods}.
\newblock Springer, 2008.

\bibitem{BDMElements}
F.~Brezzi, J.~Douglas, and L.~Marini.
\newblock Two families of mixed finite elements for second order elliptic
  problem.
\newblock {\em Math.\ Comp.}, 47:217--235, 1985.

\bibitem{mixedandhybridFEM}
F.~Brezzi and M.~Fortin.
\newblock {\em Mixed and Hybrid Finite Elements}.
\newblock Springer-Verlag, 1991.

\bibitem{BrezziF1989a}
F.~Brezzi, L.~D. Marini, and P.~Pietra.
\newblock Two-dimensional exponential fitting and applications to
  drift-diffusion models.
\newblock {\em SIAM J.\ Numer.\ Anal.}, 26:1342--1355, 1989.

\bibitem{HillariretC2004a}
Y.~J.~Peng C.~Chainais-Hillairet.
\newblock Finite volume approximation for degenerate drift-diffusion system in
  several space dimensions.
\newblock {\em Mathematical Models and Methods in Applied Sciences},
  14(03):461--481, 2004.

\bibitem{ChaudhryJ2011b}
Jehanzeb~Hameed Chaudhry, Jeffrey Comer, Aleksei Aksimentiev, and Luke~N.
  Olson.
\newblock A finite element method for modified {Poisson-Nernst-Planck}
  equations to determine ion flow though a nanopore.
\newblock Preprint.

\bibitem{HandbookNA_FEM_I}
P.G. Ciarlet and J.L. Lions, editors.
\newblock {\em Handbook of Numerical Analysis: Finite Element Methods (Part
  1)}.
\newblock North-Holland, 1991.

\bibitem{CrouzeixRaviart}
M.~Crouzeix and P.A. Raviart.
\newblock Conforming and nonconforming finite element methods for solving the
  stationary {Stokes} equations.
\newblock {\em RAIRO Anal. Numer.}, 7:33--76, 1973.

\bibitem{deFalcoC2009a}
Carlo de~Falco, Joseph~W. Jerome, and Riccardo Sacco.
\newblock Quantum-corrected drift-diffusion models: Solution fixed point map
  and finite element approximation.
\newblock {\em J.\ Comput.\ Phys.}, 228(5):1770 -- 1789, 2009.

\bibitem{GattiE1998}
E.~Gatti, S.~Micheletti, and R.~Sacco.
\newblock A new {Galerkin} framework for the drift-diffusion equation in
  semiconductors.
\newblock {\em East-West J. Numer. Math.}, 6:101--135, 1998.

\bibitem{HegartyA1993a}
Alan~F. Hegarty, Eugene O'Riordan, and Martin Stynes.
\newblock A comparison of uniformly convergent difference schemes for
  two-dimensional convection—diffusion problems.
\newblock {\em J.\ Comput.\ Phys.}, 105(1):24 -- 32, 1993.

\bibitem{JeromeSemiconductorbook}
J.~W. Jerome.
\newblock {\em Analysis of Charge Transport: A Mathematical Study of
  Semiconductor Devices}.
\newblock Springer, 1996.

\bibitem{KiselevV2011a}
Vladimir~Yu. Kiselev, Marcin Leda, Alexey~I. Lobanov, Davide Marenduzzo, and
  Andrew~B. Goryachev.
\newblock Lateral dynamics of charged lipids and peripheral proteins in
  spatially heterogeneous membranes: Comparison of continuous and monte carlo
  approaches.
\newblock {\em J. Chem. Phys.}, 135:155103, 2011.

\bibitem{LazarovR1996a}
R.~D. Lazarov, Ilya~D. Mishev, P.~S. Vassilevski, and L.
\newblock Finite volume methods for convection-diffusion problems.
\newblock {\em SIAM J.\ Numer.\ Anal.}, 33:31--55, 1996.

\bibitem{Lu07c}
B.~Z. Lu, Y.~C. Zhou, Gary~A. Huber, Steve~D. Bond, Michael~J. Holst, and J.~A.
  McCammon.
\newblock Electrodiffusion: A continuum modeling framework for biomolecular
  systems with realistic spatiotemporal resolution.
\newblock {\em J. Chem. Phys.}, 127:135102, 2007.

\bibitem{ZhouY2010a}
Benzhuo Lu, M.~J. Holst, J.~A. McCammon, and Y.~C. Zhou.
\newblock {Poisson-Nernst-Planck Equations} for simulating biomolecular
  diffusion-reaction processes {I}: {Finite element} solutions.
\newblock {\em J.\ Comput.\ Phys.}, 229:6979--6994, 2010.

\bibitem{ZhouY2011a}
Benzhuo Lu and Y.~C. Zhou.
\newblock {Poisson-Nernst-Planck} equations for simulating biomolecular
  diffusion-reaction processes {II}: Size effects on ionic distributions and
  diffusion-reaction rates.
\newblock {\em Biophys. J.}, 100:2475--2485, 2011.

\bibitem{MillerJ1994a}
J.~J.~H. Miller and S.~Wang.
\newblock A new non-conforming petrov-galerkin finite-element method with
  triangular elements for a singularly perturbed advection-diffusion problem.
\newblock {\em IMA J.\ Numer.\ Anal.}, 14(2):257--276, 1994.

\bibitem{RiordanE1991a}
Eugene O'Riordan and Martin Stynes.
\newblock A globally uniformly convergent finite element method for a
  singularly perturbed elliptic problem in two dimensions.
\newblock {\em Math.\ Comp.}, 57:47--62, 1991.

\bibitem{PinnauR2004a}
Ren\'e Pinnau.
\newblock Uniform convergence of an exponentially fitted scheme for the quantum
  drift diffusion model.
\newblock {\em SIAM J.\ Numer.\ Anal.}, 42:1648--1668, 2004.

\bibitem{RaviartThomasElements}
P.A. Raviart and J.M. Thomas.
\newblock A mixed finite element method for second order elliptic problems.
\newblock In I.~Galligani and E.~Magenes, editors, {\em Lecture notes in
  Mathematics, Vol. 606}. Springer-Verlag, 1977.

\bibitem{SaccoR1995a}
R.~Sacco, E.~Gatti, and L.~Gotusso.
\newblock The patch test as a validation of a new finite element for the
  solution of convection-diffusion equations.
\newblock {\em Comput.\ Methods Appl.\ Mech.\ Eng.}, 124:113--124, 1995.

\bibitem{SaccoR1997a}
R.~Sacco and F.~Saleri.
\newblock Stabilized mixed finite volume methods for convection-diffusion
  problems.
\newblock {\em East West J. Numer. Math.}, 5:291--311, 1997.

\bibitem{SaccoR1998a}
R.~Sacco and M.~Stynes.
\newblock Finite element methods for convection-diffusion problems using
  exponential splines on triangles.
\newblock {\em Computers Math. Applic.}, 35(3):35 -- 45, 1998.

\bibitem{SaccoR1999a}
Riccardo Sacco, Emilio Gatti, and Laura Gotusso.
\newblock A nonconforming exponentially fitted finite element method for
  two-dimensional drift-diffusion models in semiconductors.
\newblock {\em Numerical Methods for Partial Differential Equations},
  15(2):133--150, 1999.

\bibitem{ScharfetterD1969}
D.~L. Scharfetter and H.~K. Gummel.
\newblock Large-signal analysis of a silicon read diode oscillator.
\newblock {\em IEEE T.\ Electron Dev.}, 16:64--77, 1969.

\bibitem{ScheichlR2002}
R.~Scheichl.
\newblock Decoupling three-dimensional mixed problems using divergence-free
  finite elements.
\newblock {\em SIAM J.\ Sci.\ Comput.}, 23(5):1752--1776, 2002.

\bibitem{WangJ2009a}
J.~Wang, Y.~Wang, and X.~Ye.
\newblock A robust numerical method for stokes equations based on
  divergence-free h(div) finite element methods.
\newblock {\em SIAM J.\ Sci.\ Comput.}, 31(4):2784--2802, 2009.

\bibitem{WangJ2007a}
J.~Wang and X.~Ye.
\newblock New finite element methods in computational fluid dynamics by
  {H(div)} elements.
\newblock {\em SIAM J.\ Numer.\ Anal.}, 45:1269--1286, 2007.

\bibitem{ZhouY2012b}
Y.~C. Zhou.
\newblock Electrodiffusion of lipids on membrane surfaces.
\newblock {\em J. Chem. Phys.}, 136:205103, 2012.

\end{thebibliography}
\end{document}